\newcommand{\N}{\mathbb{N}}
\newcommand{\Z}{\mathbb{Z}}
\renewcommand{\L}{\mathcal{L}}
\newcommand{\size}[1]{|#1|}
\newcommand{\leafset}[1]{#1_{\mbox{\scriptsize\textleaf}}}
\newcommand{\leaf}[1]{|#1|_{\mbox{\scriptsize\textleaf}}}
\newcommand{\Alp}{\mathrm{Alph}}
\newcommand{\pref}{\mathrm{pref}}
\newcommand{\suff}{\mathrm{suff}}
\newcommand{\Pref}{\mathrm{Pref}}
\newcommand{\Suff}{\mathrm{Suff}}
\newcommand{\Fact}{\mathrm{Fact}}
\newcommand{\bigo}{\mathcal{O}}
\newcommand{\Graphs}{\mathcal{G}}
\newcommand{\Trees}{\mathcal{T}}
\newcommand{\Cater}{\mathcal{C}}
\newcommand{\infl}{\omega}
\newcommand{\Left}{\mathrm{Left}}
\newcommand{\Right}{\mathrm{Right}}
\newcommand{\RC}{\mathrm{RC}}
\newcommand{\PNW}{\mathcal{PNW}}
\newcommand{\kPNW}{k\mbox{-}\PNW}
\DeclareMathOperator{\spine}{Deg}
\newcommand{\Sub}{\mathcal{S}}
\newtheorem{theorem}{Theorem}[section]
\newtheorem{lemma}{Lemma}[section]
\newtheorem{proposition}{Proposition}[section]
\newtheorem{corollary}{Corollary}[section]
\theoremstyle{definition}
\newtheorem{definition}{Definition}[section]
\newtheorem{example}{Example}[section]
\newtheorem{remark}{Remark}[section]
\newtheorem{problem}{Problem}[section]
\journal{Theoretical Computer Science}
\begin{document}

\begin{frontmatter}

\title{Leaf realization problem, caterpillar graphs and prefix normal words}

\author[lacim]{Alexandre Blondin Mass\'e \fnref{subnserc} \corref{mycorrespondingauthor}}
\author[uqtr]{Julien de Carufel}
\author[uqtr]{Alain Goupil}
\author[lacim]{M\'elodie Lapointe \fnref{scholarship}}
\author[lacim]{\'Emile Nadeau  \fnref{scholarship}}
\author[lacim]{\'Elise Vandomme}
\fntext[subnserc]{Supported by a grant from the National Sciences and Engineering Research Council of Canada (NSERC) through Individual Discovery Grant RGPIN-417269-2013}
\fntext[scholarship]{Supported by a scholarship from the NSERC}

\address[lacim]{Laboratoire de Combinatoire et d'Informatique Math\'ematique, Universit\'e du Qu\'ebec \`a Montr\'eal, Canada}
\address[uqtr]{Laboratoire Interdisciplinaire de Recherche en Imagerie et en Combinatoire, Universit\'e du Qu\'ebec \`a Trois-Rivi\`eres, Canada}

\begin{abstract}
    Given a simple graph $G$ with $n$ vertices and a natural number  $i\leq n$, let $L_G(i)$ be the maximum number of leaves that can be realized by an induced subtree $T$ of $G$ with  $i$ vertices.
    We introduce a problem that we call the \emph{leaf realization problem}, which consists in deciding whether, for a given sequence of $n + 1$ natural numbers $(\ell_0,\ell_1,\ldots,\ell_n)$, there exists a simple graph $G$ with $n$ vertices such that $\ell_i = L_G(i)$ for $i = 0,1,\ldots,n$.
    We present basic observations on the structure of these sequences for general graphs and trees.
    In the particular case where $G$ is a caterpillar graph, we exhibit a bijection between the set of the discrete derivatives of the form $(\Delta L_G(i))_{1 \leq i \leq n - 3}$ and the set of prefix normal words.
\end{abstract}

\begin{keyword}
    graph theory \sep combinatorics on words \sep induced subtrees \sep leaf
    \sep prefix normal words \sep prefix normal form
\end{keyword}

\end{frontmatter}


\section{Introduction}

Recently, a family of binary words, called \emph{prefix normal words}, was introduced by Fici and Lipt\'ak \cite{fici} and then further investigated by Burcsi \textit{et al.}~\cite{Burcsi, Burcsi2014}. 
The defining property of these binary words is that their prefixes  contain at least as many $1$'s as any of their factors of the same length.
Moreover, for any binary word $w$, there is a prefix normal word $w'$ of the same length such that for any length $k$, the maximal number  of $1$'s in a factor of length $k$ coincide for $w$ and $w'$. 
Such $w'$ is called the \emph{prefix normal form} of $w$.
The motivation behind the study of prefix normal words and prefix normal forms comes from a variation of the \emph{binary jumbled pattern matching} (binary JPM), which asks whether, for a text of length $n$ over a binary alphabet and two numbers $x$ and $y$, there exists a substring with $x$ $1$'s and $y$ $0$'s~\cite{butman}. 
While this problem has a simple efficient solution, its indexing variation, called \emph{indexing jumbled pattern matching} (IJPM) is not trivial. 
The binary IJPM problem asks whether one can preprocess a given text of length $n$ so that one can answer quickly $(x,y)$ queries.
Up to now, the best known construction of the index of size $\bigo(n)$ takes a time $\bigo(n^2/\log n)$~\cite{cicalese,moosa}. 
It is proven that prefix normal forms of the text can be used to construct this index~\cite{fici}.
Hence, the extensive study of prefix normal words and forms can yield improvements on the binary IJPM problem.

It turns out that prefix normal words appear in another completely different context from graph theory, when studying subtrees of caterpillar trees.
More precisely, let $L_G(i)$ be the maximum number of leaves that can be realized by an induced subtree of $G$ with exactly $i$ vertices.
We are interested in studying the properties of the finite sequence $L_G(i)_{i=0,1,\ldots,n}$, called the \emph{leaf sequence of $G$}, where $n$ is the number of vertices of $G$.
\begin{problem}[Leaf Realization Problem]\label{prob:graph}
Given a sequence of $n + 1$ natural numbers $(\ell_0,\ell_1,\ldots,\ell_n)$, does there exists a graph $G$ with $n$ vertices such that
$$(L_G(0), L_G(1), \ldots, L_G(n)) = (\ell_1, \ell_2, \ldots, \ell_n)?$$
\end{problem}

Problem~\ref{prob:graph} presents many similarities with other famous realization problems investigated more than 50 years ago.
For instance, considerable attention was devoted to the \emph{graph realization problem} \cite{Erdos-Gallai}, which consists in deciding whether a finite sequence of natural numbers $(d_1,\ldots,d_n)$ is the degree sequence of some labeled simple graph.
In the case where the answer is positive, the sequence $(d_1,\ldots,d_n)$ is called a \emph{graphic sequence}.
The problem was proven to be solvable in polynomial time \cite{Erdos-Gallai,Havel,Hakimi}.
In particular, it amounts to verify $n$ inequalities and whether the sum of degrees is even \cite{Erdos-Gallai}.
Several variations of the graph realization problem have been investigated, such as the \emph{bipartite realization problem} \cite{Gale,Ryser} and the \emph{digraph realization problem} \cite{Kleitman-Wang,Fulkerson,ChenR,Anstee,Berger}.

Although we do not succeed, in this paper, in presenting a complete answer to Problem~\ref{prob:graph}, we solve the following subproblem, which casts some light on the structure of leaf sequences and suggests that solving the  general Problem~\ref{prob:graph} is hard:
\begin{problem}[Leaf Realization Problem for Caterpillar Trees]\label{prob:cat}
Given a sequence of $n + 1$ natural numbers $(\ell_0,\ell_1,\ldots,\ell_n)$, does there exist a caterpillar tree $C$ of $n$ vertices such that
$$(L_C(0), L_C(1), \ldots, L_C(n)) = (\ell_1, \ell_2, \ldots, \ell_n)?$$
\end{problem}
It is worth mentioning that, contrary to the approaches used in \cite{Erdos-Gallai,Havel,Hakimi,Gale,Ryser,Kleitman-Wang,Fulkerson,ChenR,Anstee,Berger}, our solution relies on nontrivial concepts studied in combinatorics on words.
Indeed, if we consider the sequences of differences of consecutive elements of  leaf sequences, also called the discrete derivative of the  leaf sequences,  
 then we prove that,  for caterpillar graphs, the set
$$\Delta L_{\Cater} = \{ \Delta L_C: C \mbox{ is a caterpillar}\}$$
of discrete derivative of leaf sequences of caterpillar graphs 
is precisely the set of prefix normal words.
To prove this result, we introduce the notion of \emph{reading caterpillars} of a word. 
The link between binary words and their prefix normal forms is then mirrored in terms of their reading caterpillars.
Two words with the same prefix normal form are such that their reading caterpillars have the same leaf functions.
This is yet another example of the fruitful interaction between graph theory and combinatorics on words (see for example~\cite{Prufer-1918,Kitaev-2015}).

It is worth mentioning that our motivation in the study of induced subtrees having the maximum number of leaves comes from \cite{Blondin-FPSAC,graphs_arxiv}, where remarkable structures on regular lattices are presented.
Other similar problems have also been studied, such as maximum leaf spanning subtrees~\cite{payan,garey}, frequent subtrees mining~\cite{deepak} and induced subtrees~\cite{Erdos-Saks-Sos}.

The manuscript is organized as follows.
Preliminaries are recalled in Section~\ref{sec:preliminaries}.
We introduce the notions of leaf functions, leaf sequences and their discrete derivatives in Section~\ref{sec:fully-leafed}.
We develop some tools in Section~\ref{sec:caterpillar_sequences} that are useful for the proof of our main theorems.
Section~\ref{sec:words} and \ref{sec:forms} are devoted to our main theorems about the relationship between caterpillar graphs, prefix normal words and prefix normal forms.
We  conclude with some perspectives on future work in Section~\ref{sec:concl}.
Finally, \ref{sec:appendix} contains proofs omitted in the main part for the sake of readability.


\section{Preliminaries}\label{sec:preliminaries}

We start by recalling basic terminology on words. The reader is referred to Lothaire for a complete introduction~\cite{Lothaire}.
Let $\Sigma$ be a finite set called an \emph{alphabet} whose elements are called \emph{letters}.
A \emph{word} $w=w_1w_2\cdots w_n$ of length $n$ on the the alphabet $\Sigma$
 is a finite concatenation of $n$ letters $w_i\in\Sigma$.
A \emph{language} over $\Sigma$ is any set of words on $\Sigma$, either finite or infinite.
We denote by $\Sigma^*$ the language of all finite words over $\Sigma$ and by $\Sigma^n$ the language of all finite words of length  $n$.

A word $u$ is a \emph{factor} of $w$ when $w=pus$ is the concatenation of the words $p$, $u$ and $s$ for some words $p,s$. In that case, we say that $u$ \emph{occurs} in $w$.
If $p$ (respectively $s$) is the empty word, then $u$ is called a \emph{prefix} (resp. \emph{suffix}) of $w$. We denote by $\pref_i(w)$ (resp. $\suff_i(w)$) the unique prefix (resp. suffix) of $w$ of length $i$, and by $\Fact(w)$ (respectively $\Pref(w)$, $\Suff(w)$) the set of all its factors (resp. prefixes, suffixes).
We denote by $\Fact_i(w)$ the set $\Fact(w)\cap\Sigma^i$.

The \emph{alphabet} $\Alp(w)$ of a word $w$ is the set of letters occurring in $w$. The number of occurrences of the letter $a$ in $w$ is denoted by $|w|_a$. Two words $u$ and $v$ are called \emph{abelian equivalent} if $|u|_a = |v|_a$ for all letters $a\in\Sigma$.
Finally, the \emph{reversal} of a word $w$, denoted by $\tilde{w}$, is the word obtained by reading $w$ from right to left. If $w=w_1\cdots w_n$ with $w_i\in\Sigma$, then $\tilde{w}=w_n\cdots w_1$.
Given a sequence $a = (a_1,a_2,...,a_n)$, we use the notation $a[i]$ for the element $a_i$.

We also recall some definitions and notation about graph theory and we refer the reader to \cite{Diestel--2010} for an introduction to this subject. 
All  graphs considered in this text are simple and undirected. 
Let $G = (V,E)$ be a graph with vertex set $V$ and edge set $E$.
The \emph{degree} of a vertex $u$ is the number of vertices that are adjacent to $u$ and is denoted by $\deg(u)$.

We denote by $|G|$ or $|V|$ the total number of vertices of $G$ which is called the \emph{size} of $G$. 
For $U \subseteq V$, the \emph{subgraph of $G$ induced by $U$}, denoted by $G[U]$, is the graph $G[U] = (U, E \cap \mathcal{P}_2(U))$, where $\mathcal{P}_2(U)$ is the set of all subsets of size $2$ of $U$. 
An \emph{induced subtree of $G$} is a connected and acyclic induced subgraph of $G$, that is, a tree.

Let $T = (V,E)$ be a \emph{tree}.
We say that a vertex $u$ of $T$ with $\deg(u)=1$ is a \emph{leaf} of $T$ and we denote by $\leaf{T}$ the number of leaves of $T$.
Let $\leafset{V}$ be the set of leaves of $T$.
A tree $T$ is called a \emph{caterpillar} if the induced subgraph $T[V\setminus \leafset{V}]$ is a chain graph, i.e. if all leaves of $T$ are adjacent to a single central chain of $T$. We call this central chain $T[V\setminus \leafset{V}]$ the \emph{spine} of the caterpillar $T$.
The set of all caterpillars is denoted by $\Cater$.


\section{Fully leafed induced subtrees}\label{sec:fully-leafed}

We first recall the definition of \emph{leaf function} from \cite{graphs_arxiv}.

\begin{definition}[Leaf function, \cite{graphs_arxiv}]\label{D:leaf-function}
    Given a finite graph $G = (V,E)$, let $\mathcal{T}_G(i)$ be the family of all induced subtrees of $G$ with exactly $i$ vertices. The \emph{leaf function} of $G$, denoted by $L_G$, is the function with domain $\{0,1,2,\ldots,\size{G}\}$ defined by
    $$L_G(i)=\max\{\leaf{T} : T\in \mathcal{T}_G(i)\}.$$
    As is customary, we set $\max\emptyset = -\infty$.
    An induced subtree $T$ of $G$ with $i$ vertices is called \emph{fully leafed} when $\leaf{T} = L_G(i)$.
\end{definition}

\begin{example} \label{ex:leaf-function}
    Consider the graph $G$ depicted in Figure~\ref{fig:leaf_function}. Its leaf function is
    $$
    \begin{array}{c|ccccccccc}
    i &      0 & 1 & 2 & 3 & 4 & 5 & 6 & 7 & 8\\ \hline
    L_G(i) & 0 & 0 & 2 & 2 & 3 & 4 & 4 & 5 & -\infty\\
    \end{array}
    $$
    and the subtree induced by $\{1,2,3,4,6,8\}$ is fully leafed because it has $6$ vertices and $4$ leaves and $L_G(6) = 4$ leaves.
\end{example}

\begin{figure}[ht]
    \begin{center}
        \includegraphics[scale=1]{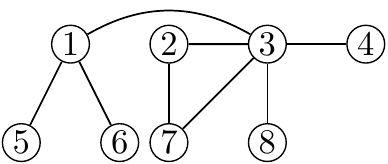}
    \end{center}
    \caption{A graph with vertex set $V=\{1,\ldots,8\}$}\label{fig:leaf_function}
\end{figure}

\begin{remark}
    For any simple graph $G$ with at least one vertex, we have $L_G(0) = 0$ since the empty tree has no leaf, and  $L_G(1) = 0$ because a single vertex is not a leaf. Finally, $L_G(2)=2$ in any graph $G$ with at least one edge.
    See \cite{graphs_arxiv} for more properties of the function $L_G$. 
\end{remark}

\begin{proposition}[\cite{graphs_arxiv}]\label{prop:non-decreasing}
    Let $G$ be a simple graph with $n\ge 3$ vertices.
    \begin{itemize}
        \item If $G$ is connected and non-isomorphic to $K_n$, the complete graph on $n$ vertices, then $L_G(3) = 2$.
        \item The sequence $(L_G(i))_{i=0,1,\ldots,n}$ is non-decreasing  if and only if $G$ is a tree.
    \end{itemize}
\end{proposition}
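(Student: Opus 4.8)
The first item is fairly local. To show $L_G(3)=2$ when $G$ is connected, simple, with $n\geq 3$ vertices and $G\not\cong K_n$: since $G$ has at least one edge, $L_G(2)=2$, and any induced subtree on $3$ vertices is either a path $P_3$ (two leaves) or a triangle—but a triangle has a cycle, so it is not a tree. Hence $L_G(3)\leq 2$. For the lower bound, I would argue that an induced $P_3$ exists: since $G$ is connected and not complete, there are two non-adjacent vertices $u,w$; take a shortest path between them, and its first three vertices induce a $P_3$ (shortest paths are induced). So $L_G(3)=2$.

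The second item is the substantive one: $(L_G(i))_{i=0,\dots,n}$ is non-decreasing $\iff$ $G$ is a tree. For the ($\Leftarrow$) direction, suppose $G$ is a tree and fix $i$ with $L_G(i)$ finite, i.e. $G$ has an induced subtree $T$ on $i$ vertices. I want to produce an induced subtree on $i+1$ vertices with at least as many leaves. The key point is that in a tree $G$, a connected induced subgraph on $i<n$ vertices can always be extended by one vertex (pick any edge of $G$ leaving $T$; since $G$ is a tree, adding that neighbor keeps it acyclic and connected). Now I need the leaf count not to drop. If the newly added vertex $v$ is attached to a leaf $\ell$ of $T$, then $\ell$ stops being a leaf but $v$ becomes one, so the leaf count is unchanged; if $v$ is attached to an internal vertex of $T$ (or $T$ is a single edge / small), then $v$ becomes a new leaf and no leaf is lost, so the count strictly increases. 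In all cases $L_G(i+1)\geq \leaf{T}$, and choosing $T$ fully leafed gives $L_G(i+1)\geq L_G(i)$. One should also check the boundary cases $i=0,1,2$ against the Remark, which is immediate.

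For the ($\Rightarrow$) direction—the harder direction—I would prove the contrapositive: if $G$ is connected but not a tree, then the sequence is not non-decreasing. (If $G$ is disconnected, it is certainly not a tree; here one should either handle disconnected $G$ separately or note that $G[V]$ itself is the relevant object. Actually if $G$ is disconnected then no induced subtree spans all $n$ vertices, so $L_G(n)=-\infty$ while $L_G(2)=2$ assuming $G$ has an edge, breaking monotonicity; and if $G$ has no edge at all then $L_G(i)=-\infty$ for $i\geq 2$ but $L_G(1)=0$, again not non-decreasing for $n\geq 3$—wait, that needs $L_G(2)<L_G(1)$, and $-\infty<0$, fine.) So assume $G$ is connected with a cycle. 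Then $L_G(n)=-\infty$ because the only induced subgraph on all $n$ vertices is $G$ itself, which has a cycle and is not a tree; but $L_G(n-1)$ is finite—indeed any connected graph on $n$ vertices has a spanning tree, and deleting a non-cut vertex (a leaf of a spanning tree) leaves a connected graph on $n-1$ vertices, which has a spanning tree, which is an induced subtree only if... hmm, one must be careful: a spanning tree of $G[V\setminus\{x\}]$ need not be an induced subgraph. The cleaner route: pick a spanning tree $S$ of $G$ and let $x$ be a leaf of $S$; then... still not induced.

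Let me instead use the standard fact that a connected graph has a connected induced subgraph on every number of vertices $1,\dots,n$ (build up a BFS/DFS order $v_1,\dots,v_n$; then $\{v_1,\dots,v_k\}$ induces a connected subgraph for each $k$). Apply this to get a connected induced subgraph $H$ on $n-1$ vertices; $H$ may still contain a cycle, but then recurse. Formally: let $j$ be the largest index such that $G$ has an induced subtree on $j$ vertices. Since $K_1$ or $K_2$ works, $j\geq 1$; since $G$ has a cycle, $j\leq n-1$, so $L_G(n)=-\infty<L_G(j)\geq 0$. Hmm, but I need $j<n$ and some $i\le n$ with $L_G(i)>L_G(i+1)$; taking $i=j$ works if $L_G(j+1)=-\infty$, which holds by maximality of $j$ only if there is literally no induced subtree on $j+1$ vertices. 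That is exactly the definition of $j$ being largest. But wait—is it true that the set of sizes admitting an induced subtree is an interval $[0,j]$? Not obviously, and that is the crux. I expect \textbf{this is the main obstacle}: showing that for a graph with a cycle, there is some size at which the leaf function drops to $-\infty$ (equivalently, the "tree sizes" are bounded below $n$), and pinning down that $L_G(n)=-\infty$ while $L_G(n-1)>-\infty$. The honest approach is: $L_G(n)=-\infty$ trivially (the whole graph has a cycle); and $L_G(n-1)\ge 0 > -\infty$ because any connected graph $G$ with $n\ge 2$ has a vertex $x$ whose removal keeps it connected (non-cut vertex, e.g. a leaf of a DFS tree), and then $G[V\setminus\{x\}]$ is connected on $n-1$ vertices hence contains an induced subtree on $n-1$ vertices? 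No—same problem. The resolution: $G[V\setminus\{x\}]$ connected on $n-1$ vertices is enough to guarantee $\mathcal{T}_G(n-1)\neq\emptyset$ only via taking a spanning tree, which isn't induced. So the right statement to prove and use is a \emph{lemma}: every connected graph on $m$ vertices contains an induced subtree on $m-1$ vertices (delete a simplicial-like vertex, or more robustly, delete a leaf of a BFS tree—a BFS-tree leaf has all its neighbors either its parent or at the same/adjacent BFS level, so removing it... ). I will establish this lemma first (it is where the work lies), then the chain $L_G(n)=-\infty$, $L_G(n-1)\ge 0$ finishes the contrapositive, and together with the Remark and the $(\Leftarrow)$ direction completes the proof.
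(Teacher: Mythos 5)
The paper itself does not prove this proposition: it is quoted from \cite{graphs_arxiv}, so there is no internal proof to compare against, and your attempt has to be judged on its own. Your first bullet is correct (the only $3$-vertex induced subtree is a $P_3$, and a shortest path between two non-adjacent vertices yields an induced $P_3$), and so is the direction ``$G$ a tree $\Rightarrow$ non-decreasing'': in a tree, a vertex outside an induced subtree $T$ that is adjacent to $T$ has exactly one neighbour in $T$, so the one-vertex extension never loses leaves.

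The converse direction, however, rests on a lemma that is false. You propose to show that every connected graph on $m$ vertices contains an induced subtree on $m-1$ vertices; $K_4$ refutes this, since every $3$-vertex induced subgraph of $K_4$ is a $K_3$ and hence $L_{K_4}(3)=-\infty$. So the chain ``$L_G(n)=-\infty$ while $L_G(n-1)\ge 0$'' cannot be established in general, and the obstacle you flag as ``where the work lies'' is not surmountable as stated. It is also entirely unnecessary: if $G$ is not a tree, then $G[V]=G$ is not a tree, so $\mathcal{T}_G(n)=\emptyset$ and $L_G(n)=-\infty$, while $L_G(1)=0$ with $1<n$. A non-decreasing sequence with $L_G(1)=0$ would force $L_G(n)\ge 0$, so the sequence must decrease at some intermediate index; you never need to locate where the drop to $-\infty$ happens or to compare consecutive terms explicitly. (Incidentally, your side worry about whether the realizable sizes form an interval $[0,j]$ also dissolves: deleting a leaf of an induced subtree yields an induced subtree with one fewer vertex, so they always do --- but again this is not needed.) With the false lemma replaced by this two-line observation, your proof is complete.
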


In what follows, we are interested in the internal structure of the words associated with the leaf functions.

\begin{definition}[Leaf sequence]\label{D:leaf-sequence}
    Let $\ell$ be a finite sequence in the set $\N\cup\{-\infty\}$. We say that $\ell$ is a \emph{leaf sequence} if there exists a simple graph $G$ such that the sequence of values of its leaf function $L_G$ is equal to $\ell$, i.e. if  $\ell = (L_G(i))_{i=0,1,2,\ldots, \size{G}}.$
\end{definition}

A useful concept in the investigation of leaf sequences is the associated word of its first differences, also called the \emph{discrete derivative}.
\begin{definition}[Leaf word]\label{D:leaf word}
    Let $G$ be a simple graph with $n$ vertices and $L_G$ its associated leaf function.
    The \emph{leaf word} of $G$, denoted by $\Delta L_G$, is the word on the alphabet $\Z \cup \{\infl\}$ with $i$-th letter given by
    $$\Delta L_G(i) = L_G(i+3) - L_G(i+2),$$
    for $i = 1,2,\ldots,n-3$.
    We set $L_G(i+3) - L_G(i+2) = \infl$ whenever $L_G(i+2) = -\infty$ or $L_G(i+3) = -\infty$.
\end{definition}

\begin{remark}
    Recall that $L_G(0) = 0$, $L_G(1) = 0$, $L_G(2) = 2$ for any graph with at least one edge and that $L_G(3) = 2$ for any connected graph non isomorphic to a complete graph. Therefore, for most connected graphs, the sequence of differences $(L_G(i)-L_G(i-1))_{i=1,\ldots,n}$ of consecutive values of $L_G$ always starts with the prefix $020$. To avoid repeating this information, we have chosen to erase the first three letters in the sequence of first differences and shift indices by $3$ units from left to right  in the above definition of leaf word. 
\end{remark}

\begin{example}
    \label{example:LW}
    Consider the graph $G$  represented in Figure~\ref{fig:ex_leaf_word}.  The prefix $020$ in the sequence of differences  is omitted 
  and the leaf word of $G$ is  $\Delta L_G = 110101$.
    \begin{figure}[ht]
        \begin{center}
            \includegraphics{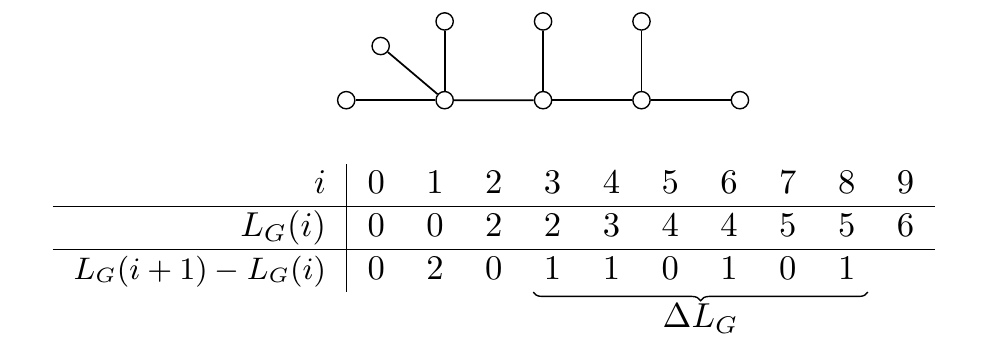}
        \end{center}
        \caption{A graph $G$, its leaf function $L_G$, its sequence of differences  and its leaf word $\Delta L_G$.}\label{fig:ex_leaf_word}
    \end{figure}
    
\end{example}

A first step to  answer Problem~\ref{prob:graph} consists in describing the admissible alphabets of leaf words.
\begin{lemma}\label{L:alph01} The set $S=\{1,0,-1,-2,\dots,\infl\}$ is the smallest set such that for any graph $G$ with at least $3$ vertices we have  $\Alp(\Delta L_G) \subseteq S$.
\end{lemma}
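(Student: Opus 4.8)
The claim has two parts: first, that $\Alp(\Delta L_G) \subseteq S$ for every graph $G$ with at least $3$ vertices; second, that $S$ is the smallest such set, i.e.\ every element of $S$ is actually attained as a letter of $\Delta L_G$ for some graph $G$.

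For the inclusion, I would bound $\Delta L_G(i) = L_G(i+3) - L_G(i+2)$ from above and below. The upper bound $\Delta L_G(i) \le 1$ is the interesting direction: I would argue that given a fully leafed induced subtree $T$ with $i+3$ vertices and $L_G(i+3)$ leaves, one can delete a well-chosen leaf of $T$ to obtain an induced subtree with $i+2$ vertices that still has at least $L_G(i+3) - 1$ leaves. Concretely, pick a leaf $u$ of $T$ whose neighbour $v$ in $T$ has degree at least $3$ in $T$ (such a leaf exists unless $T$ is a path, and the path case is easy to handle separately since removing an endpoint changes the leaf count by at most $1$); then $T - u$ is still a tree on $i+2$ vertices, it is still induced in $G$, and removing $u$ destroys the leaf $u$ but cannot create more than... actually removing $u$ turns $v$ into a leaf only if $v$ had degree $2$, which contradicts our choice, so $T-u$ has exactly $L_G(i+3) - 1$ leaves, giving $L_G(i+2) \ge L_G(i+3) - 1$, i.e.\ $\Delta L_G(i) \le 1$. (If $L_G(i+3) = -\infty$ there is nothing to prove; if $L_G(i+2) = -\infty$ then $\Delta L_G(i) = \infl \in S$.) There is no lower bound other than $-\infty$: $L_G(i+2)$ can be as large as $i+1$ while $L_G(i+3)$ can drop to $-\infty$, and even when finite the value can decrease by an arbitrary amount, so every negative integer and $\infl$ must in principle be allowed, which is exactly why $S = \{1, 0, -1, -2, \dots\} \cup \{\infl\}$ and not a smaller set.

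For minimality I must exhibit, for each value $s \in S$, a graph $G$ (with $\ge 3$ vertices) and an index $i$ with $\Delta L_G(i) = s$. The values $1$ and $0$ are realized already by small trees such as the path $P_4$ or the star, as in Example~\ref{example:LW}. For $\infl$, any graph whose largest induced subtree has size $m < n$ produces $\Delta L_G(i) = \infl$ for $i$ near $m$; the complete graph $K_n$ (where every induced subtree has at most $2$ vertices) or $K_n$ minus a perfect matching works. The real content is realizing each negative integer $-k$: I would take a graph built from a large clique (or a clique with a few pendant structures) so that for some $i$ there is a large induced subtree on $i+2$ vertices with many leaves, but on $i+3$ vertices the only induced trees are forced to be paths or near-paths with far fewer leaves — tuning the size of the clique and the number of pendants lets the drop $L_G(i+3) - L_G(i+2)$ be exactly $-k$ for any prescribed $k \ge 1$.

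The main obstacle is the last point: constructing, for each $k$, an explicit graph where the leaf function drops by exactly $k$ at a single step. The upper-bound half and the $\infl$-realization are routine once the leaf-deletion argument is set up; the delicate part is controlling both $L_G(i+2)$ and $L_G(i+3)$ simultaneously in a single family of examples, since enlarging an induced subtree by one vertex can force it through a "bottleneck" that collapses many leaves at once, and one has to verify that no other induced subtree of size $i+3$ does better.
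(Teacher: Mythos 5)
The inclusion half of your argument is sound and is essentially the paper's: from a fully leafed induced subtree $T$ on $i+3$ vertices one deletes a leaf to obtain an induced subtree on $i+2$ vertices with at least $L_G(i+3)-1$ leaves, whence $\Delta L_G(i)\le 1$. Your extra care in selecting a leaf whose neighbour has degree at least $3$ (and the separate path case) is unnecessary: removing \emph{any} leaf of a tree on at least $3$ vertices destroys one leaf and creates at most one new one, so the count drops by at most one, which is all the bound needs. The letters $0$, $1$ and $\infl$ are indeed realized by small examples, as you say.

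The genuine gap is in the minimality half, exactly where you flag ``the main obstacle''. To prove $S$ is smallest you must exhibit, for every $k\ge 1$, a concrete graph whose leaf word contains the letter $-k$, and your proposed family --- a large clique with pendant structures --- does not obviously do this. An induced subtree of such a graph can contain at most two clique vertices (three would force a triangle), so once $i$ exceeds the size of the largest induced subtree the value $L_G(i)$ becomes $-\infty$, producing the letter $\infl$ rather than a controlled finite drop; to get the letter $-k$ you need induced subtrees of size $i+3$ to still \emph{exist} but to be forced into a shape with $k$ fewer leaves than the best tree of size $i+2$. The paper achieves this with the wheel graph $W_{2k+4}$: induced subtrees containing the hub are stars whose rim vertices form an independent set of the cycle, hence have at most $\lfloor n/2\rfloor$ leaves, while induced subtrees avoiding the hub are induced paths with exactly $2$ leaves; so $L_{W_n}(i)=i-1$ up to $i=\lfloor n/2\rfloor+1$ and then collapses to $2$, yielding the letter $2-\lfloor n/2\rfloor=-k$. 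Without such an explicit family, together with the verification that no induced subtree of the critical size does better, the minimality claim is not established; your sketch acknowledges this but does not close it.
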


\begin{proof}
    Let $L_G$ be the leaf function of a simple graph $G$ with at least $3$ vertices and let $i \geq 3$. Obviously,  $1$, $0$ and $\infl$ are possible values of $L_G(i+1) - L_G(i)$ as shown in the leaf function of Example~\ref{ex:leaf-function}.
    Additionally, the difference $L_G(i+1)- L_G(i)$ may take any negative integer value. 
    Indeed, consider the wheel graph $W_n$ with $n+1$ vertices. Its leaf function is

    $$L_{W_n}(i) = \begin{cases}
    0,       & \mbox{if $i = 0, 1$;} \\
    2,       & \mbox{if $i = 2$;} \\
    i-1,     & \mbox{if $3 \leq i \leq \lfloor\frac{n}{2}\rfloor+1$;} \\
    2,       & \mbox{if $\lfloor\frac{n}{2}\rfloor+2 \leq i \leq n - 1$;} \\
    -\infty, & \mbox{if $n \le i \le n+1$;}
    \end{cases}$$
    as shown in \cite{graphs_arxiv}.  Therefore $\Delta L_{W_{2k+4}}$ contains the letter $-k$ for any $k\geq 1$.  The particular case of $W_{10}$ is depicted in Figure~\ref{fig:wheel} where we observe that \\
 $L_{W_{10}}(7)- L_{W_{10}}(6)=-3$.
    
    It remains to show that for any graph $G$, the positive integers $k>1$ cannot occur in $\Delta L_G$. Arguing by contradiction, assume that $G$ is a graph with $n$ vertices such that $\Delta L_G$ contains some letter $k \in \mathbb{N} \setminus \{0,1\}$. By Definition~\ref{D:leaf word}, we have $L_G(i+1)- L_G(i) = k>1$ for some $3\leq i<n$. This means that there exists an induced subtree $T$ of $G$ with $i+1$
    vertices and $L_G(i)+k$ leaves. Let $T'$ be a  subgraph obtained by removing one leaf from $T$. Then $T'$ is an induced subtree with $i$ vertices and has at least $L_G(i) + k - 1 > L_G(i)$ leaves, contradicting the definition of $L_G(i)$.
\end{proof}

\begin{figure}[ht]
    \begin{center}
        \includegraphics{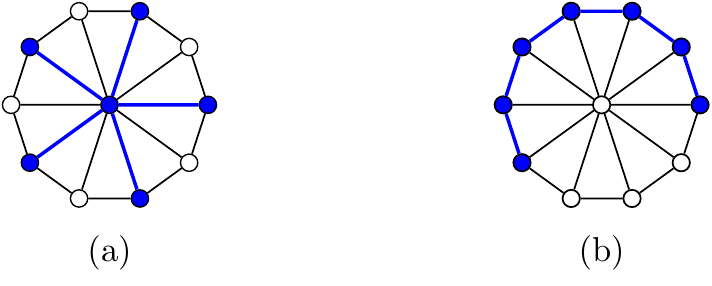}
    \end{center}
    \caption{The wheel graph $W_{10}$ with two fully leafed induced subtrees of respective size (a) $6$ and (b) $7$ in blue.}\label{fig:wheel}
\end{figure}

An elementary yet useful observation is that the alphabet of the leaf word indicates whether the associated graph is a tree. The following lemma is an immediate consequence of Lemma~\ref{L:alph01} and Proposition~\ref{prop:non-decreasing}.

\begin{lemma}~\label{prop:alph-graph}
    Let $G$ be a graph with at least 3 vertices. Then $\Alp(\Delta L_G) \subseteq \{0,1\}$ if and only if $G$ is a tree.
\end{lemma}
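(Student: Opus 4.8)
The plan is to prove the biconditional in two directions, using Lemma~\ref{L:alph01} to control which letters can appear and Proposition~\ref{prop:non-decreasing} to connect monotonicity with the tree property. First I would handle the easy direction: suppose $G$ is a tree with at least $3$ vertices. By Proposition~\ref{prop:non-decreasing}, the sequence $(L_G(i))_{i=0,\ldots,n}$ is non-decreasing, so every first difference $L_G(i+3)-L_G(i+2)$ is a non-negative integer (in particular, since $G$ is a tree it is connected, so $L_G(i) \neq -\infty$ for all $i \le n$, which rules out the letter $\infl$). Combining ``non-negative'' with the conclusion of Lemma~\ref{L:alph01}, which says $\Alp(\Delta L_G) \subseteq \{1,0,-1,-2,\ldots,\infl\}$, the only surviving letters are $0$ and $1$. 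Hence $\Alp(\Delta L_G) \subseteq \{0,1\}$.

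For the converse, I would prove the contrapositive: if $G$ is not a tree, then $\Alp(\Delta L_G) \not\subseteq \{0,1\}$, i.e. $\Delta L_G$ contains a letter from $\{-1,-2,\ldots\} \cup \{\infl\}$. If $G$ is disconnected or has more vertices than any induced subtree can use (which happens precisely when $G$ is not a tree, so $\mathcal{T}_G(n) = \emptyset$), then $L_G(n) = -\infty$ while $L_G(n-1)$ or earlier values are finite, forcing the letter $\infl$ to appear somewhere in $\Delta L_G$; one must check the index bookkeeping so that this occurs within the range $1 \le i \le n-3$, which needs $n$ large enough, but for small $n$ one argues directly. If instead $G$ is connected but contains a cycle, then by Proposition~\ref{prop:non-decreasing} the sequence $(L_G(i))_i$ is \emph{not} non-decreasing, so there is some index with $L_G(i+1) - L_G(i) < 0$; since by Lemma~\ref{L:alph01} the positive value $2$ (and larger) is impossible and the difference is a non-negative-or-$\infl$-or-negative integer, this strictly negative difference is one of $-1,-2,\ldots$, and again I would check that such an index falls in the leaf-word range $\{1,\ldots,n-3\}$.

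The main obstacle I anticipate is the index-shifting bookkeeping rather than any conceptual difficulty: the leaf word $\Delta L_G$ deliberately discards the first three differences and reindexes, so a ``bad'' difference witnessing non-monotonicity or the value $-\infty$ must be located at a position $i+2, i+3$ with $1 \le i \le n-3$, i.e. among the indices $3,\ldots,n$. One has to confirm that the negative jump guaranteed by Proposition~\ref{prop:non-decreasing} (or the drop to $-\infty$) cannot be hiding entirely in the suppressed prefix $020$; since $L_G(0)=0, L_G(1)=0, L_G(2)=2$ are fixed and $L_G(3)=2$ for connected non-complete graphs, the suppressed part is monotone except possibly when $G=K_n$, which is easy to treat separately. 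Modulo these small-case and edge-index verifications, the lemma follows immediately, exactly as the statement claims, by intersecting the alphabet constraint of Lemma~\ref{L:alph01} with the monotonicity characterization of Proposition~\ref{prop:non-decreasing}.
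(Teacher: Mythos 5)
Your proof is correct and follows exactly the route the paper takes: the paper gives no separate argument for this lemma, declaring it an immediate consequence of Lemma~\ref{L:alph01} and Proposition~\ref{prop:non-decreasing}, which are precisely the two results you combine (tree $\Rightarrow$ sequence non-decreasing and all values finite $\Rightarrow$ only $0$ and $1$ survive the alphabet constraint of Lemma~\ref{L:alph01}; non-tree $\Rightarrow$ a decrease or a $-\infty$ value, hence a letter outside $\{0,1\}$). The only loose end you flag --- the index bookkeeping for small $n$ --- is real but equally unaddressed in the paper: for $n=3$ the leaf word is empty and the ``only if'' direction is vacuous, while for $n\ge 4$ your own observation that $\mathcal{T}_G(n)=\emptyset$ for every non-tree already places the letter $\infl$ at position $n-3\ge 1$, so no further case analysis is needed.
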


Now it follows directly from Definition~\ref{D:leaf word} and Lemma~\ref{prop:alph-graph} that, for a tree $T$,
\begin{equation}\label{EQ:LDL}
L_T(j) - L_T(i) = |\suff_{j - i}(\pref_{j-3}(\Delta L_T))|_1
\end{equation}
for $3 \leq i \leq j \leq n$.

For instance in the graph $G$ of Example~\ref{example:LW} showed in Figure \ref{fig:ex_leaf_word}, we verify Equation~\eqref{EQ:LDL} with $i = 4$ and $j = 7$ as $L_G(7)-L_G(4)= 2 = |101|_1 = |\suff_{3}(\pref_{4}(\Delta L_G))|_1$.

At this stage, we are not able to provide a complete answer to Problem~\ref{prob:cat} for general graphs.
 However, restricting our attention to caterpillar graphs leads to  an interesting connection with the so-called \emph{prefix normal words}. To show this connection, we need to introduce the notion of caterpillar sequences.


\section{Caterpillar sequences}\label{sec:caterpillar_sequences}

For practical purposes, it is convenient to represent caterpillar graphs by sequences of natural numbers.
\begin{definition}~\label{D:dir_cater}
    A \emph{caterpillar sequence} $S=(s_1,\ldots,s_k)$ of length $k$ is a sequence of $k$ non negative integers such that $s_1, s_k \geq 1$ and, if $k = 1$, then $s_1 \geq 2$.
   
    The \emph{size} $\size{S}$ of $S$ and its \emph{number of leaves} $\leaf{S}$ are respectively defined by
    $$\size{S} = k + \sum_{i=1}^k s_i \text{ and }
    \leaf{S} = \sum_{i=1}^k s_i.$$
   The \emph{reversal} of $S$ is the caterpillar sequence $\widetilde{S} = (s_k,\ldots,s_1)$, that we also denote by $S\widetilde{\phantom{ab}}$ with the exponent notation for typographic reasons.
   We denote by $\Sub$ the set of all caterpillar sequences.
\end{definition}

We choose the expression ``caterpillar sequence'' to explain the fact that to each caterpillar sequence $S = (s_1,\ldots,s_k)$ corresponds, up to isomorphism, a unique caterpillar graph $C$ with spine $(v_1,\ldots,v_k)$ such that the number of leaves adjacent to $v_i$ is $s_i$ for all $i\in\{1,\ldots,k\}$.
Therefore,
$$\size{S} = \size{C} \quad \text{and} \quad \leaf{S} = \leaf{C}.$$
Conversely, to each caterpillar graph $C$ corresponds one caterpillar sequence $S$ obtained by arbitrarily choosing one among the two orientations of its spine, say $(v_1,\cdots,v_k)$, and by setting $s_1 = \deg(v_1) - 1$, $s_k = \deg(v_k) - 1$ and
$$s_i=\deg(v_i)-2, \quad \mbox{for all $i\in\{2,\ldots,k-1\}$},$$
if $k > 1$, and simply $s_1=\deg(v_1)$ if $k = 1$.

Next, we define the function $\spine$ on caterpillar sequences as the function that produces the sequence of degrees of the corresponding caterpillars :

\begin{align*}\label{DEF:f}
\spine(s_1,\ldots,s_k)=\begin{cases}
(s_k) &\mbox{ if } k=1,\\
(s_1+1,s_2+2,\ldots , s_{k-1}+2,s_k+1) &\mbox{ if } k>1.
\end{cases}
\end{align*}

Observe that in the definition of $\spine$, we avoid double parentheses and write $\spine(s_1,\dots,s_k)$ instead of $\spine((s_1,\dots, s_k))$.

\begin{definition}\label{D:subsequence}
    Given two caterpillar sequences $S=(s_1,\ldots,s_k)$ and $S' = (s'_1,\ldots,s'_{k'})$ of respective
    lengths $k$ and $k'$ with $1\leq k'\leq k$, we say that $S'$ is a \emph{caterpillar subsequence} of $S$, and we write $S' \preceq S$, if there exists an integer $i\in\{0,\ldots,k-k'\}$ such that
    $$\spine(S')[j] \leq \spine(S)[j+i], \quad \mbox{for all $j \in \{1,\ldots,k'\}$.}$$
\end{definition}

Finally, given any caterpillar sequence $S$, we denote by $L_S$ the \emph{leaf function} of its associated caterpillar graph, so that Definition~\ref{D:leaf-function} translates as
\begin{equation}\label{EQ:leaf-function-seq}
L_S(i) = \max\{ \leaf{S'} : S' \preceq S, \size{S'} = i\}.
\end{equation}

The next observation, whose proof can be found in \ref{sec:appendix}, follows directly from Definition~\ref{D:subsequence}.
\begin{proposition}\label{P:poset}
    The relation $\preceq$ is a partial order on the set $\Sub$ of all caterpillar sequences.
\end{proposition}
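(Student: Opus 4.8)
The plan is to verify the three defining properties of a partial order — reflexivity, antisymmetry, and transitivity — directly from Definition~\ref{D:subsequence}, working throughout with the degree sequences produced by $\spine$ rather than with the caterpillar sequences themselves. The key observation that makes everything routine is that $S' \preceq S$ is, almost by definition, a statement about \emph{windowed domination} of the integer sequence $\spine(S')$ inside $\spine(S)$: there is an offset $i$ such that $\spine(S')[j] \le \spine(S)[j+i]$ for all $j$ in the range of $S'$. So I would first record this reformulation explicitly and then treat $\preceq$ as essentially the ``factor-domination'' order on finite integer sequences, being careful only about the boundary bookkeeping coming from the piecewise definition of $\spine$ (the $+1$ at the two ends, the $+2$ in the middle, and the degenerate $k=1$ case).

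First, reflexivity: given $S$ of length $k$, take $i = 0$; then $\spine(S)[j] \le \spine(S)[j]$ for all $j \in \{1,\ldots,k\}$, so $S \preceq S$. Second, transitivity: suppose $S'' \preceq S'$ with offset $i'$ and $S' \preceq S$ with offset $i$, where $S''$, $S'$, $S$ have lengths $k''\le k' \le k$. Then for all $j \in \{1,\ldots,k''\}$ we have $\spine(S'')[j] \le \spine(S')[j+i'] \le \spine(S)[j+i'+i]$, and since $i'' := i'+i$ satisfies $0 \le i'' \le k - k''$ (because $i' \le k'-k''$ and $i \le k-k'$), the composite offset $i''$ witnesses $S'' \preceq S$. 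Third, antisymmetry: suppose $S' \preceq S$ and $S \preceq S'$. Comparing lengths, the first gives $k' \le k$ and the second gives $k \le k'$, so $k = k'$. Then the only admissible offset in each direction is $0$, so $\spine(S')[j] \le \spine(S)[j]$ and $\spine(S)[j] \le \spine(S')[j]$ for all $j$, hence $\spine(S') = \spine(S)$. It remains to deduce $S = S'$ from $\spine(S) = \spine(S')$; this is where one uses that $\spine$ is injective on $\Sub$ — either by inverting the formula coordinate-wise (subtract $1$ at the ends, $2$ in the middle, and handle $k=1$ separately, noting that the constraints $s_1,s_k\ge 1$ and $s_1\ge 2$ when $k=1$ are exactly what make the inversion land back inside $\Sub$) or by invoking the already-stated fact that caterpillar sequences are in bijection with caterpillar graphs up to the choice of spine orientation.

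I expect no genuine obstacle here; the statement is essentially folklore once the right reformulation is in place. The only place demanding a little care is the interaction between lengths and offsets in the definition of $\preceq$: one must confirm that when $S' \preceq S$ the length inequality $k' \le k$ is built into the definition (it is, via $i \in \{0,\ldots,k-k'\}$), since this is what drives the antisymmetry argument. A secondary minor point is to make sure the degenerate case $k=1$ (where $\spine(s_1) = (s_1)$ rather than using the end-corrections) is not an exception to injectivity of $\spine$; since a length-$1$ caterpillar sequence requires $s_1 \ge 2$ and maps to the one-element degree sequence $(s_1)$, it is recovered unambiguously. With those two checks dispatched, the three order axioms follow as above.
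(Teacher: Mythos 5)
Your proof is correct and follows essentially the same route as the paper's: reflexivity via the zero offset, transitivity by composing offsets and checking the bound $0 \le i + i' \le k - k''$, and antisymmetry by forcing equal lengths and a zero offset so that $\spine(S) = \spine(S')$. Your only addition is to spell out the injectivity of $\spine$ (including the $k=1$ case) in the final step of antisymmetry, which the paper leaves implicit; this is a harmless extra precaution, not a different argument.
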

One may notice that the caterpillar sequences $(1,1)$ and $(3)$ are both covered by $(1,2)$ and $(2,1)$ with respect to the order $\preceq$  (see Figure~\ref{fig:poset_caterpillar})
so that the poset $(\Sub,\preceq)$ is not a lattice.
\begin{figure}[ht]
    \centering
    \includegraphics[scale=0.28]{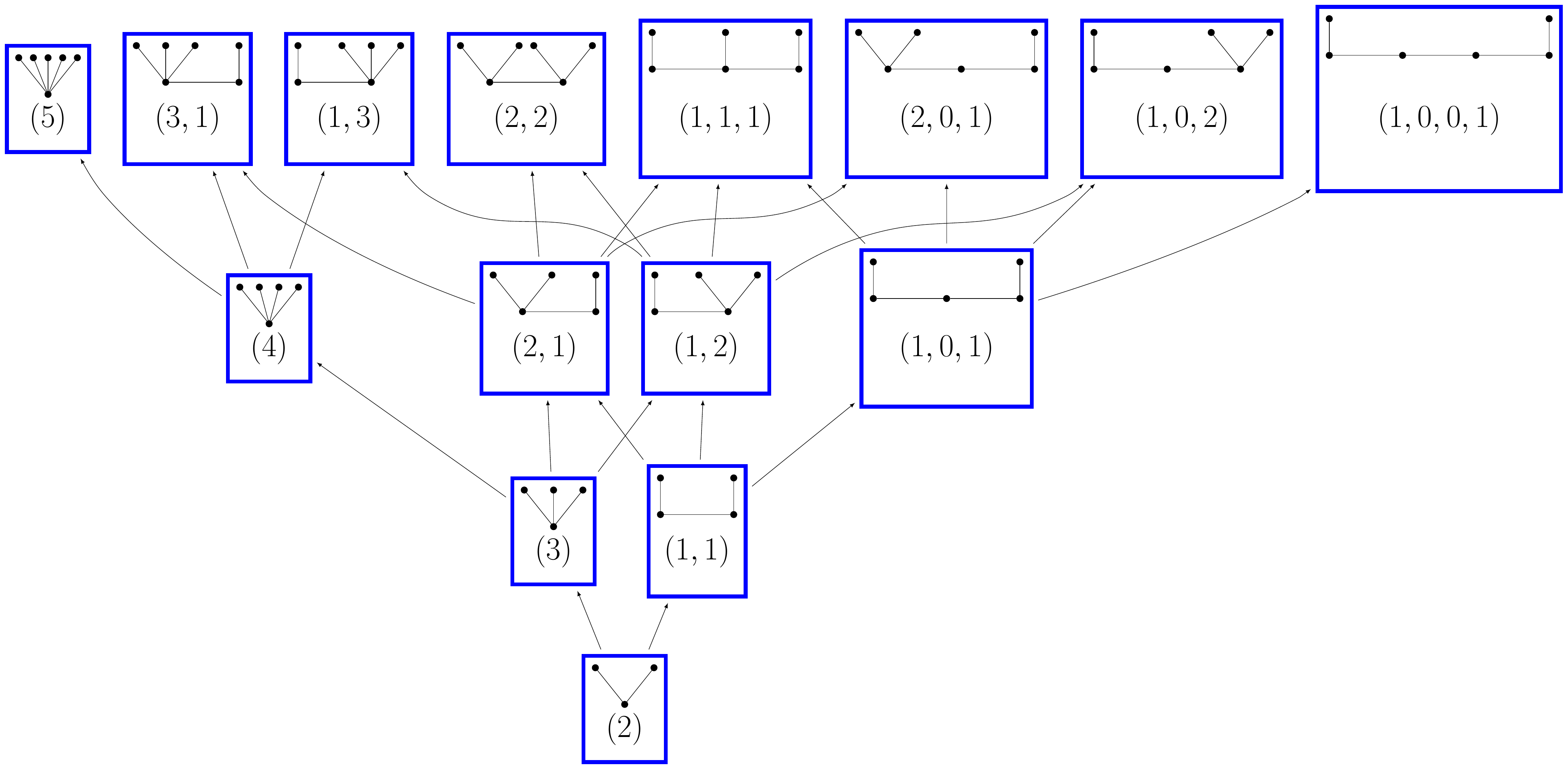}
    \caption{Hasse diagram of caterpillar sequences of size up to 6 of $(\Sub,\preceq)$} \label{fig:poset_caterpillar}
\end{figure}

In fact, caterpillar subsequences are precisely caterpillar sequences of induced subcaterpillars.
\begin{definition} \label{def:fully-leafed-subseq}
    A caterpillar subsequence $S'$ of $S$ is called \emph{fully leafed} if its corresponding tree is a fully leafed induced subtree of the tree associated to $S$. 
\end{definition}

Leftmost and rightmost caterpillar subsequences are of particular interest.
Let $S = (s_1,\ldots, s_k)$ be a caterpillar sequence. Given $i \in \{3,4,\ldots,|S|\}$, the \emph{left caterpillar subsequence of size $i$} of $S$ is defined recursively by
\begin{equation}\label{EQ:left}
\Left_i(S) = \begin{cases}
S,
& \mbox{if $i = |S|$;} \\
\Left_i(s_1,\ldots,s_{k-1},s_k-1),
& \mbox{if $i < |S| \mbox{ and } s_k \geq 2$;} \\
\Left_i(s_1,\ldots,s_{k-1} + 1),
& \mbox{if $i < |S| \mbox{ and } s_k = 1$.}
\end{cases}
\end{equation}

The \emph{right caterpillar subsequence of size $i$} of $S$ is defined similarly by
\begin{equation}\label{EQ:right}
\Right_i(S) = \begin{cases}
S,
& \mbox{if $i = |S|$;} \\
\Right_i(s_1 - 1,s_2,\ldots,s_k),
& \mbox{if $i < |S| \mbox{ and } s_1 \geq 2$;} \\
\Right_i(s_2 + 1,\ldots,s_k),
& \mbox{if $i < |S| \mbox{ and }  s_1 = 1$.}
\end{cases}
\end{equation}

The following observations are immediate. See \ref{sec:appendix} for the proofs.
\begin{lemma}\label{L:subs}
    Let $S$ be a caterpillar sequence and $3 \leq i \leq |S|$. Then
    \begin{enumerate}[\rm(i)]
        \item $\Left_i(S) \preceq S$;
        \item $\Right_i(S) \preceq S$;
        \item $\Left_i(\widetilde{S}) = \widetilde{\Right_i(S)}$.\label{L:subsiii}
    \end{enumerate}
\end{lemma}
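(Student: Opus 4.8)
\textbf{Proof strategy for Lemma~\ref{L:subs}.}
The plan is to treat the three claims in order, since (iii) will depend on understanding the interplay between $\Left$ and $\Right$. For (i) and (ii), I would argue by induction on $|S| - i$. When $|S| - i = 0$ there is nothing to prove because $\Left_i(S) = S$ (resp.\ $\Right_i(S) = S$) and $\preceq$ is reflexive by Proposition~\ref{P:poset}. For the inductive step of (i), note that one recursive step of $\Left_i$ transforms $S = (s_1,\ldots,s_k)$ into a caterpillar sequence $S''$ with $|S''| = |S| - 1$, obtained either by decrementing $s_k$ (when $s_k \geq 2$) or by deleting $s_k$ and incrementing $s_{k-1}$ (when $s_k = 1$). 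In both cases I would check directly from the definition of $\spine$ that $\spine(S'')[j] \leq \spine(S)[j]$ for all $j \in \{1,\ldots,|S''|_{\text{len}}\}$: in the first case only the last coordinate changes (it drops by one), and in the second case the last coordinate of $\spine(S)$ disappears and the new last coordinate of $\spine(S'')$ equals $s_{k-1}+1 \leq s_{k-1}+2$, the value it had in $\spine(S)$. Hence $S'' \preceq S$ with shift $i = 0$. By the induction hypothesis $\Left_i(S'') = \Left_i(S) \preceq S''$, and since $\preceq$ is transitive (Proposition~\ref{P:poset}) we conclude $\Left_i(S) \preceq S$. The argument for (ii) is symmetric, acting on the left end of the sequence and using shift $i = 1$ in the deletion case (the surviving sequence embeds starting at the second spine coordinate of $S$).

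For (iii), the cleanest route is to show that the defining recursions for $\Left$ and $\Right$ are exchanged under reversal. Concretely, I would prove by induction on $|S| - i$ that $\Left_i(\widetilde{S}) = \widetilde{\Right_i(S)}$. The base case $i = |S|$ is immediate since both sides equal $\widetilde{S}$. For the inductive step, observe that $\widetilde{S} = (s_k,\ldots,s_1)$, so the last entry of $\widetilde{S}$ is $s_1$. The three branches of the definition of $\Left_i$ applied to $\widetilde{S}$ are governed by whether $s_1 \geq 2$ or $s_1 = 1$, which are exactly the branches that govern $\Right_i$ applied to $S$. In the case $s_1 \geq 2$ one has $\Left_i(\widetilde{S}) = \Left_i(s_k,\ldots,s_2,s_1-1) = \Left_i(\widetilde{S'})$ where $S' = \Right_i(s_1-1,s_2,\ldots,s_k)$ at the level of one recursion step, i.e.\ $S' = (s_1 - 1, s_2, \ldots, s_k)$; similarly in the case $s_1 = 1$ one gets $\Left_i(\widetilde{S}) = \Left_i(s_k,\ldots,s_2+1) = \Left_i(\widetilde{S'})$ with $S' = (s_2 + 1, \ldots, s_k)$. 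In both cases $S'$ is precisely the sequence obtained from $S$ by one recursive step of $\Right_i$, so $\widetilde{\Right_i(S)} = \widetilde{\Right_i(S')}$, and by the induction hypothesis $\Left_i(\widetilde{S'}) = \widetilde{\Right_i(S')}$. Combining these equalities gives $\Left_i(\widetilde{S}) = \widetilde{\Right_i(S)}$.

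The main obstacle I anticipate is purely bookkeeping rather than conceptual: one must be careful about the boundary coordinates of $\spine$, because the ``$+1$'' applied to the endpoints versus the ``$+2$'' applied to interior spine vertices means that deleting an endpoint of the caterpillar sequence both removes a coordinate and changes the coefficient attached to the new endpoint. Getting the shift value right in Definition~\ref{D:subsequence} (namely $i = 0$ for $\Left$ and $i = 1$ for $\Right$ in the deletion branches) is where an off-by-one error is most likely to creep in, so I would verify these inequalities coordinate by coordinate in each of the two recursive branches before invoking transitivity. Once the single-step inequalities are nailed down, the inductions are routine.
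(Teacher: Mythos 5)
Your proof is correct and follows essentially the same route as the paper's: peel off one recursive step of $\Left_i$ (resp.\ $\Right_i$), check coordinate-by-coordinate on $\spine$ that the one-step sequence is $\preceq S$, conclude by induction and transitivity, with (ii) by left--right symmetry and (iii) from the exchange of the two recursions under reversal. One harmless slip: in the deletion branch the new last coordinate of $\spine(S'')$ is $(s_{k-1}+1)+1 = s_{k-1}+2$ (not $s_{k-1}+1$) whenever $S''$ still has length at least $2$, which equals $\spine(S)[k-1]$, so the required inequality holds with equality rather than strictly.
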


 $\Left_i(S)$ and $\Right_i(S)$ are unique sequences (see \ref{sec:appendix} for the proof).
\begin{lemma}\label{L:alpha-beta}
Let $S = (s_1,\ldots,s_k)$ be a caterpillar sequence of length $k$, and $3 \leq i \leq |S|$. Then there exist unique integers $a$ and $\alpha$ such that $\Left_i(S) = (s_1,\dots, s_a,\alpha)$, satisfying the relations

\begin{equation}
0 \leq a \leq k - 1, \quad
1 \leq \alpha \leq s_{a+1} + 1 \quad \text{and} \quad
i = \sum_{m = 1}^a (s_m + 1) + (\alpha + 1).\label{EQ:alpha}
\end{equation}
In the same way, there exist unique integers $b$ and $\beta$ such that
$\Right_i(S) = (\beta, s_b, \dots, s_k)$, where
\begin{equation}
2 \leq b \leq k+1, \quad
1 \leq \beta \leq s_{b-1} + 1 \quad \text{and} \quad
i = \sum_{m = b}^k (s_m + 1) + (\beta + 1).\label{EQ:beta}
\end{equation}
\end{lemma}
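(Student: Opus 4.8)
The plan is to prove the claim for $\Left_i$ and then transfer it to $\Right_i$ using the symmetry $\Left_i(\widetilde S) = \widetilde{\Right_i(S)}$ of Lemma~\ref{L:subs}(\ref{L:subsiii}). For $\Left_i$, uniqueness is automatic: whatever sequence $\Left_i(S)$ is, once it is written in the form $(s_1,\dots,s_a,\alpha)$ the integer $a$ is its length minus one and $\alpha$ its last letter, so the content is the \emph{existence} of a representation of this shape satisfying~\eqref{EQ:alpha}. Note also that each of the two reductions in~\eqref{EQ:left} sends a caterpillar sequence to a caterpillar sequence of size exactly one less (the hypotheses $i \geq 3$ and, in the case $s_k = 1$, $k \geq 2$ handle the boundary situations where validity could be at risk), so $\Left_i(S)$ is well defined for $3 \leq i \leq |S|$ and is reached after exactly $|S| - i$ reductions. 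We argue by induction on $|S| - i$.

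If $|S| - i = 0$, then $\Left_i(S) = S = (s_1,\dots,s_{k-1},s_k)$, and $a = k-1$, $\alpha = s_k$ satisfy~\eqref{EQ:alpha} at once, using $s_k \geq 1$, $k \geq 1$ and $|S| = \sum_{m=1}^{k-1}(s_m+1)+(s_k+1)$. Suppose $i < |S|$. One reduction gives $\Left_i(S) = \Left_i(S')$, where $S'$ is obtained from $S$ by replacing $s_k$ with $s_k-1$ if $s_k \geq 2$, or by deleting $s_k$ and replacing $s_{k-1}$ with $s_{k-1}+1$ if $s_k = 1$ (so $k \geq 2$). In both cases $S'$ is a caterpillar sequence with $3 \leq i \leq |S'| = |S|-1$, so the induction hypothesis gives $\Left_i(S') = (s'_1,\dots,s'_{a'},\alpha')$ with $a',\alpha'$ satisfying the analogue of~\eqref{EQ:alpha} for $S'$. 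The induction hypothesis bounds $a'$ by the length of $S'$ minus one; since $S$ and $S'$ agree on all their entries but the last one (resp. last two), this guarantees $(s'_1,\dots,s'_{a'}) = (s_1,\dots,s_{a'})$, hence $\Left_i(S) = (s_1,\dots,s_{a'},\alpha')$. We put $a = a'$ and $\alpha = \alpha'$; the inequalities $0 \leq a \leq k-1$, $\alpha \geq 1$ and the size identity in~\eqref{EQ:alpha} carry over verbatim because that identity only involves $s_1,\dots,s_a$.

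The single delicate point is the bound $\alpha \leq s_{a+1}+1$, which need not be immediate exactly when $a+1$ is the index at which $S$ and $S'$ differ. If $s_k \geq 2$, this forces $a = k-1$ and $s'_{a+1} = s_k-1 \leq s_k$, so the induction hypothesis already gives $\alpha' \leq s'_{a+1}+1 \leq s_{a+1}+1$. If $s_k = 1$, this forces $a = k-2$ and $s'_{a+1} = s_{k-1}+1$, so the induction hypothesis only gives $\alpha' \leq s_{k-1}+2$; but combining $i = \sum_{m=1}^{k-2}(s'_m+1)+(\alpha'+1)$ with $i \leq |S'| = \sum_{m=1}^{k-1}(s'_m+1)$ yields $\alpha'+1 \leq s'_{k-1}+1$, i.e.\ $\alpha' \leq s'_{k-1} = s_{k-1}+1 = s_{a+1}+1$. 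This completes the induction, and with it the statement about $\Left_i$.

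For $\Right_i$, apply the $\Left_i$ statement to $\widetilde S = (s_k,\dots,s_1)$ and use $\widetilde{\Right_i(S)} = \Left_i(\widetilde S)$: writing $\Left_i(\widetilde S) = (s_k,s_{k-1},\dots,s_{k-a+1},\alpha)$ and reversing gives $\Right_i(S) = (\alpha, s_{k-a+1},\dots,s_k)$, so that $b = k-a+1$ and $\beta = \alpha$ satisfy~\eqref{EQ:beta}: the relation $0 \leq a \leq k-1$ becomes $2 \leq b \leq k+1$, the bound $\alpha \leq s_{k-a}+1$ becomes $\beta \leq s_{b-1}+1$, and $\sum_{m=1}^a(s_{k-m+1}+1) = \sum_{m=b}^k(s_m+1)$ converts the size identity for $\Left_i(\widetilde S)$ into the one for $\Right_i(S)$. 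We expect the main obstacle to be the bookkeeping in the inductive step — tracking precisely which one or two entries the reduction alters and confirming that the upper bound on $\alpha$ survives it; the remainder is a routine transfer of the relations and the symmetric translation.
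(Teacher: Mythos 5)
Your proof is correct and follows essentially the same induction as the paper's (peeling off one size-reduction of Equation~\eqref{EQ:left} at a time and transferring the bounds from $S'$ to $S$), with the $\Right_i$ case obtained by the same reversal symmetry. It is worth noting that your handling of the delicate bound $\alpha \le s_{a+1}+1$ in the case $s_k = 1$, $a = k-2$ is actually more careful than the paper's own argument, which there asserts the chain $\alpha \le s'_{a+1}+1 \le s_{a+1}+1$ even though $s'_{k-1} = s_{k-1}+1 > s_{k-1}$ makes the second inequality fail; your supplementary step using $i \le |S'| = \sum_{m=1}^{k-1}(s'_m+1)$ to force $\alpha \le s'_{k-1} = s_{a+1}+1$ is exactly what is needed to close that gap.
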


\begin{remark}
In Lemma~\ref{L:alpha-beta}, when $a = 0$, we abuse the notation and define $(s_1,\ldots,s_a,\alpha) = (\alpha)$. Similarly, if $b = 0$, then $(\beta,s_b,\ldots,s_k) = (\beta)$.
\end{remark}

\begin{example}\label{ex:subsequences}
    Consider the caterpillar graph $C$, depicted in Figure~\ref{fig:subsequences}, with size $\size{C}=16$ 
    and number of leaves  $\leaf{C}=10$. Its caterpillar sequence is $S=(3,0,2,4,0,1)$ with $\size{C}=\size{S}$.
    The sequence $(1,1,4,0,1)$ is a caterpillar subsequence of $S$ with associated subcaterpillar shown in Figure~\ref{fig:subsequences} $(b)$.
    It is not a right caterpillar subsequence because one leaf adjacent to the third leftmost inner vertex is missing. The sequence $(6)$ is a caterpillar subsequence of $S$ and has a unique associated subcaterpillar of $C$ shown in Figure~\ref{fig:subsequences} $(a)$.
    The left and right caterpillar subsequences of $S$ of size $6$ are respectively
    $$\Left_6(S) = (3,1)\text{ and }\Right_6(S) = (2,0,1)$$
    and their associated graphs are shown in Figures~\ref{fig:subsequences} $(c)$ and $(d)$.
\end{example}

\begin{figure}[ht]
    \begin{center}
        \includegraphics{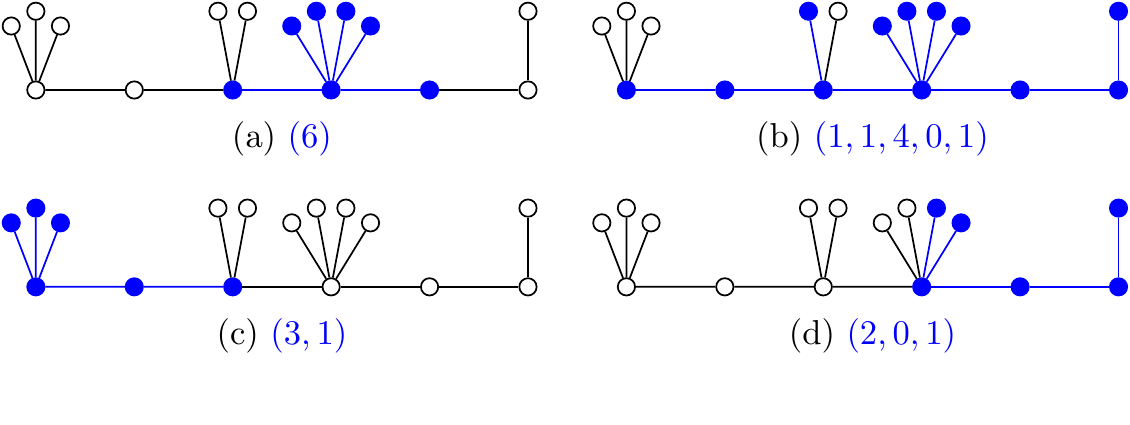}
    \end{center}
    \caption{
        The  graph associated to the caterpillar sequence $(3,0,2,4,0,1)$  and some induced subtrees, highlighted in blue.
        (a) The associated subtree of the caterpillar subsequence $(6)$.
        (b) The associated subtree of the caterpillar subsequence $(1,1,4,0,1)$.
        (c) The associated subtree of the  left caterpillar subsequence of size $6$.
        (d) The associated subtree of the right caterpillar subsequence of size $6$.
    }
    \label{fig:subsequences}
\end{figure}

We now introduce a useful operation called the \emph{graft} of caterpillar sequences.

\begin{definition}\label{D:graft}
    Let $S = (s_1,s_2,\dots,s_k)$ and $S' = (s'_1,s'_2,\dots,s'_l)$ be two caterpillar sequences. The graft of $S$ and $S'$, denoted by $S\diamond S'$, is the caterpillar sequence
    \begin{eqnarray*}
        S\diamond S' = (s_1,s_2,\dots, s_{k-1},s_k +s'_1 -2,s'_2,\dots,s'_{k'}).
    \end{eqnarray*}
\end{definition}

As an example, we have  $(4,1)\diamond (3,0,1)=(4,2,0,1)$. Figure~\ref{fig:graft} shows how the graft of sequences is interpreted on the corresponding caterpillars.

\begin{figure}[ht]
    \begin{center}
        \includegraphics[scale=1]{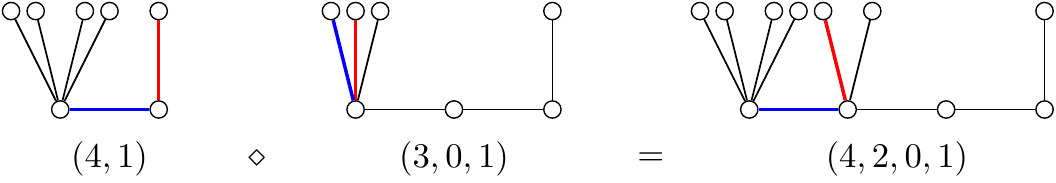}
    \end{center}
    \caption{The graft of the caterpillar sequences $(4,1)$ and $(3,0,1)$ and the corresponding caterpillar graphs. The blue (respectively red) edges on left hand side are merged on the right hand side.}
    \label{fig:graft}
\end{figure}

It is straightforward to prove that $(\Sub, \diamond)$ is a non-commutative monoid with identity $(2)$. Moreover this monoid is isomorphic to $(\Sigma^*, \cdot)$, the monoid of words with concatenation.
Also, $S$, $S'\preceq S\diamond S'$, for any caterpillar sequences $S$ and $S'$.

The maps $\size{\cdot}$ and $\leaf{\cdot}$ on caterpillar sequences
are compatible with the graft operation in the following sense. Given two caterpillar sequences $S$ and $S'$, we have
\begin{alignat}{2}\label{EQ:n1}
\leaf{S\diamond S'} & = \leaf{S}+\leaf{S'}-2, \\
\label{EQ:n}
\size{S  \diamond S'} & = \size{S}+\size{S'}-3.
\end{alignat}

Also, a straightforward computation shows that for any caterpillar sequences $S$ and $S'$:
\begin{equation}\label{eq:graft-tilde}
    \widetilde{S \diamond S'} = \widetilde{S'} \diamond \widetilde{S}.
\end{equation}

The graft operation is useful for the decomposition of a caterpillar sequence as a ``product'' of  smaller sequences.
\begin{lemma}~\label{l:graft}
    Let $S$ be a caterpillar sequence. Then, for any integer $i \in \{3,4,\ldots,|S|\}$,
    $$S = \Left_i(S) \diamond \Right_{|S|+3-i}(S).$$
\end{lemma}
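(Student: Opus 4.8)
The plan is to verify the identity by a direct computation, tracking what the three operations $\Left_i$, $\Right_{|S|+3-i}$ and $\diamond$ do to the underlying sequences, and then checking the two cases that appear in the recursive definitions. First I would invoke Lemma~\ref{L:alpha-beta} to write $\Left_i(S) = (s_1,\ldots,s_a,\alpha)$ with the constraints in \eqref{EQ:alpha}; in particular the size constraint gives $i = \sum_{m=1}^a(s_m+1) + (\alpha+1)$, hence $|S| + 3 - i = \sum_{m=a+1}^k (s_m+1) + (2 - \alpha) + 1$, using $|S| = \sum_{m=1}^k(s_m+1)$. Set $j = |S| + 3 - i$. By Lemma~\ref{L:alpha-beta} applied to $j$, we have $\Right_j(S) = (\beta, s_b, \ldots, s_k)$ with $i = \sum_{m=b}^k(s_m+1) + (\beta+1)$ — wait, with $j$ in place of $i$, so $j = \sum_{m=b}^k(s_m+1)+(\beta+1)$. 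Comparing this with the expression for $j$ above forces $b = a+1$ and $\beta = s_{a+1} + 2 - \alpha$ (both are forced by the uniqueness part of Lemma~\ref{L:alpha-beta}, since $1 \le \beta \le s_{a+1}+1$ is equivalent to $1 \le \alpha \le s_{a+1}+1$).

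Then I would simply apply Definition~\ref{D:graft}: with $\Left_i(S) = (s_1,\ldots,s_a,\alpha)$ and $\Right_j(S) = (\beta, s_{a+1}, \ldots, s_k)$ — note the spine index shifts so the second sequence, as an abstract caterpillar sequence, is $(\beta, s_{a+1}, \ldots, s_k)$ — the graft is
$$\Left_i(S) \diamond \Right_j(S) = (s_1,\ldots,s_{a-1}, \alpha + \beta - 2, s_{a+1},\ldots,s_k) = (s_1,\ldots,s_{a-1}, s_{a+1}, \ldots)$$
oops — more carefully, $\alpha + \beta - 2 = \alpha + (s_{a+1} + 2 - \alpha) - 2 = s_{a+1}$, so the graft equals $(s_1,\ldots,s_a, s_{a+1}, \ldots, s_k) = S$. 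Here I should double-check the indexing in Definition~\ref{D:graft}: the graft of $(s_1,\ldots,s_k)$ and $(s_1',\ldots,s_l')$ is $(s_1,\ldots,s_{k-1}, s_k + s_1' - 2, s_2', \ldots, s_l')$, so with first sequence ending in $\alpha$ (playing the role of $s_k$) and second sequence starting with $\beta$ (playing the role of $s_1'$), the merged entry is $\alpha + \beta - 2 = s_{a+1}$, and the trailing part is $s_{a+1}$-th onward... I mean $s_{a+2}', \ldots$, which are $s_{a+2}, \ldots, s_k$. So the result is $(s_1,\ldots,s_a, s_{a+1}, s_{a+2}, \ldots, s_k) = S$, as desired.

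I would also handle the boundary cases cleanly: when $i = |S|$ we have $a = k-1$, $\alpha = s_k$, $\Right_{3}(S) = (2)$ (the monoid identity), and $S \diamond (2) = S$; symmetrically when $j = |S|$, i.e. $i = 3$, we have $\Left_3(S) = (2)$ and $(2) \diamond S = S$. The degenerate conventions $a = 0$ or $b = k+1$ from the remark after Lemma~\ref{L:alpha-beta} are absorbed by the same computation once one adopts $(s_1,\ldots,s_0,\alpha) = (\alpha)$, etc. Finally, I should remark that it suffices to treat, say, the behaviour of $\Left_i$ explicitly and then obtain the statement about $\Right_j$ for free via Lemma~\ref{L:subs}\eqref{L:subsiii} together with \eqref{eq:graft-tilde}: applying the reversal to the already-established identity $\widetilde{S} = \Left_i(\widetilde{S}) \diamond \Right_{|\widetilde S|+3-i}(\widetilde S)$ and using $\widetilde{X \diamond Y} = \widetilde Y \diamond \widetilde X$ yields the claim for $S$.

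\textbf{Main obstacle.} The computation itself is routine; the only real care needed is bookkeeping the index shift between a caterpillar sequence and its spine, and making sure the uniqueness clauses of Lemma~\ref{L:alpha-beta} are correctly used to pin down $b = a+1$ and $\beta = s_{a+1}+2-\alpha$ rather than verifying it by an independent induction. An alternative, and perhaps cleaner, route avoiding Lemma~\ref{L:alpha-beta} altogether is to induct on $|S| - i$: the base case $i = |S|$ is immediate since $\Right_3(S) = (2)$ is the identity of $(\Sub,\diamond)$, and the inductive step compares the two branches of \eqref{EQ:left} against the corresponding branches of \eqref{EQ:right}, checking in each of the (at most) two cases that decrementing $\Left$ and incrementing $\Right$ — or the merge/unmerge at a spine endpoint — leaves the graft unchanged. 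I would likely present the induction as the main argument, as it is the most self-contained.
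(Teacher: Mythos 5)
Your argument follows essentially the same route as the paper's: both invoke Lemma~\ref{L:alpha-beta} to write $\Left_i(S)=(s_1,\ldots,s_a,\alpha)$ and $\Right_{|S|+3-i}(S)=(\beta,s_b,\ldots,s_k)$, pin down the split point so that the merged entry $\alpha+\beta-2$ equals $s_{a+1}$, and conclude by Definition~\ref{D:graft}; the paper establishes $b-1=a+1$ by ruling out $b-1<a+1$ and $b-1>a+1$ from the summed size equations, while you derive it from the uniqueness clause of Lemma~\ref{L:alpha-beta}, which amounts to the same counting argument. Two index slips in your write-up should be corrected, although they do not affect your final conclusion: the computation gives $|S|+3-i=\sum_{m=a+1}^{k}(s_m+1)+2-\alpha$ (not $+(2-\alpha)+1$, which is off by one and, if propagated, would yield $\alpha+\beta-2=s_{a+1}+1$), and the forced index is $b=a+2$ (equivalently $b-1=a+1$), so that $\Right_{|S|+3-i}(S)=(\beta,s_{a+2},\ldots,s_k)$; with $b=a+1$ as written, the graft $(s_1,\ldots,s_a,\alpha+\beta-2,s_{a+1},\ldots,s_k)$ would have one spine vertex too many.
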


\begin{proof}
Let $n = |S|$ and write $S = (s_1,\dots, s_k)$.
From Lemma~\ref{L:alpha-beta}, we have
$$\Left_i(S) = (s_1,\ldots,s_a,\alpha) \quad \text{and} \quad \Right_{n+3-i}(S) = (\beta,s_b,\ldots,s_k),$$
where $0 \leq a \leq k - 1$, $1 \leq \alpha \leq s_{a+1}+1$, $2 \leq b \leq k+1$, $1 \leq \beta \leq s_{b-1}+1$ and
$$i = \sum_{m=1}^a (s_m+1) + (\alpha + 1) \quad \text{and} \quad n+3-i = \sum_{m=b}^k (s_m + 1) + (\beta + 1).$$
Summing up those two last equations yields
\begin{equation}\label{EQ:ab}
n + 3 = \sum_{m=1}^a s_m + \sum_{m=b}^k s_m + (a + (k - b + 1)) + (\alpha + \beta) + 2.
\end{equation}
We claim that $a + 1 = b - 1$. Arguing by contradiction, assume first that $a + 1 > b - 1$. Then Equation~\eqref{EQ:ab} implies
$$n + 3 \geq \left(\sum_{m=1}^k s_m + k\right) + (a - b + 1) + (\alpha + \beta + 2) > n + \alpha + \beta + 1,$$
so that $\alpha + \beta < 2$, which is impossible.
Next, assume that $a + 1 < b - 1$.
Using again Equation~\eqref{EQ:ab}, we obtain
\begin{align*}
n + 3 &\leq \left(\sum_{m=1}^k s_m + k\right) - (s_{a+1} + s_{b-1}) + (a - b + 1) + (\alpha + \beta + 2) \\
&< n - s_{a+1} - s_{b-1} + \alpha + \beta + 1.
\end{align*}
Thus $\alpha + \beta > s_{a+1} + s_{b-1} + 2$, which is also impossible.

Therefore,
$$n + 3 = n - s_{a+1} + \alpha + \beta + 1,$$
which implies $s_{a+1} = \alpha + \beta - 2$. Hence
\begin{alignat*}{2}
  \Left_i(S) \diamond \Right_{n+3-i}(S)
    & = (s_1,\ldots,s_a,\alpha) \diamond (\beta,s_b,\ldots,s_k) \\
    & = (s_1,\ldots,s_a,\alpha+\beta-2,s_b,\ldots,s_k) \\
    & = (s_1,\ldots,s_a,s_{a+1},s_{a+2},\ldots,s_k) \\
    & = S,
    \end{alignat*}
    concluding the proof.
\end{proof}

\begin{remark}\label{R:factor-tree}
The graft of $\Left_i(S)$ and $\Right_{n+3-i}(S)$ provides a factorization of any caterpillar into two smaller caterpillars of arbitrary size $i$ and $n+3-i$, for any $3 \leq i \leq n$, but it does not seem possible to extend this factorization to the larger class of trees. For example, we do not see how one could ``factor'' the tree illustrated in Figure~\ref{fig:ternary_tree} as the graft of  one tree  of size $5$ and one tree of size $11$.
\end{remark}

\begin{figure}[ht]
    \begin{center}
        \includegraphics{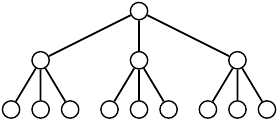}
    \end{center}
    \caption{A tree with leaf word non prefix normal.}
    \label{fig:ternary_tree}
\end{figure}

Caterpillars can be built naturally by reading binary words from left to right.
\begin{definition}\label{D:reading}
    We define the \emph{reading caterpillar sequence} $\RC(w)$ of a binary word $w$ recursively on $|w|$ as follows. The reading caterpillar sequence $\RC(\varepsilon)$ of the empty word is the sequence $(2)$ corresponding to a simple chain on three vertices.
    Let $ua$ be a binary word with $u\in\{0,1\}^*,a\in\{0,1\}$ and let $\RC(u)=(r_1,\ldots,r_k)$ be the reading caterpillar sequence of $u$. Then the reading caterpillar sequence of $ua$ is
    \begin{equation}\label{EQ:reading-graft}
    \RC(ua) = \begin{cases}
    \RC(u)\diamond (1,1), & \mbox{if $a = 0$;} \\
    \RC(u)\diamond (3),   & \mbox{if $a = 1$.}
    \end{cases}
    \end{equation}
\end{definition}

Note that Equation~\eqref{EQ:reading-graft} is equivalent to
\begin{equation}\label{EQ:reading}
    \RC(ua) = \begin{cases}
    (r_1,\ldots,r_k -1,1), & \mbox{if $a = 0$;} \\
    (r_1,\ldots,r_k +1),   & \mbox{if $a = 1$.}
    \end{cases}
\end{equation}

The following observations are immediate consequences of Definition~\ref{D:reading} (see \ref{sec:appendix} for the proof).

\begin{lemma}\label{L:cwt}
    Let $u, v$ and $w$ be binary words on $\{0,1\}$. Then
    \begin{enumerate}[\rm(i)]
        \item $\RC(uv) = \RC(u) \diamond \RC(v)$; \label{L:cwti}
        \item $\RC(\widetilde{w}) = \widetilde{\RC(w)}$\label{L:cwtii}.
    \end{enumerate}
\end{lemma}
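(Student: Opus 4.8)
\textbf{Proof plan for Lemma~\ref{L:cwt}.}
The plan is to prove both statements by induction on the length of the word whose last letter we strip off, relying on the recursive Definition~\ref{D:reading} together with the fact that $(\Sub,\diamond)$ is a monoid with identity $(2)$. For part~\ref{L:cwti}, I would fix $u$ and argue by induction on $|v|$. The base case $v = \varepsilon$ is immediate since $\RC(\varepsilon) = (2)$ is the identity for $\diamond$, so $\RC(u) \diamond \RC(\varepsilon) = \RC(u) = \RC(u\varepsilon)$. For the inductive step, write $v = v'a$ with $a \in \{0,1\}$. Then $uv = uv'a$, and applying Equation~\eqref{EQ:reading-graft} twice (once to $uv'a$ and once to $v'a$) gives
$$\RC(uv'a) = \RC(uv') \diamond x_a, \qquad \RC(v'a) = \RC(v') \diamond x_a,$$
where $x_a = (1,1)$ if $a = 0$ and $x_a = (3)$ if $a = 1$. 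Using the induction hypothesis $\RC(uv') = \RC(u) \diamond \RC(v')$ and the associativity of $\diamond$, we get $\RC(uv'a) = (\RC(u) \diamond \RC(v')) \diamond x_a = \RC(u) \diamond (\RC(v') \diamond x_a) = \RC(u) \diamond \RC(v'a)$, as desired.

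For part~\ref{L:cwtii}, I would induct on $|w|$. The base case $w = \varepsilon$ is trivial since $\widetilde{\varepsilon} = \varepsilon$ and $\RC(\varepsilon) = (2) = \widetilde{(2)}$. For the inductive step, write $w = ua$ with $a \in \{0,1\}$, so that $\widetilde{w} = a\widetilde{u}$. On one hand, by Equation~\eqref{EQ:reading-graft}, $\RC(w) = \RC(ua) = \RC(u) \diamond x_a$, hence by Equation~\eqref{eq:graft-tilde},
$$\widetilde{\RC(w)} = \widetilde{\RC(u) \diamond x_a} = \widetilde{x_a} \diamond \widetilde{\RC(u)}.$$
On the other hand, $\RC(\widetilde{w}) = \RC(a\widetilde{u}) = \RC(a) \diamond \RC(\widetilde{u})$ by part~\ref{L:cwti}. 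By the induction hypothesis, $\RC(\widetilde{u}) = \widetilde{\RC(u)}$, so it remains to check that $\RC(a) = \widetilde{x_a}$. This is a direct computation from the definition: $\RC(0) = \RC(\varepsilon) \diamond (1,1) = (2) \diamond (1,1) = (1,1)$ and $\RC(1) = \RC(\varepsilon) \diamond (3) = (2) \diamond (3) = (3)$, while $\widetilde{(1,1)} = (1,1)$ and $\widetilde{(3)} = (3)$; in both cases $\RC(a) = x_a = \widetilde{x_a}$ since $(1,1)$ and $(3)$ are palindromic sequences. Combining, $\RC(\widetilde{w}) = \widetilde{x_a} \diamond \widetilde{\RC(u)} = \widetilde{\RC(w)}$.

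There is no real obstacle here: both parts are routine structural inductions, and the only slightly delicate point is making sure part~\ref{L:cwti} is available (with $u$ a single letter) before using it in the proof of part~\ref{L:cwtii}, which is fine since~\ref{L:cwti} is proved first and holds for all words including one-letter words. One should also recall explicitly that $\diamond$ is associative with identity $(2)$ — stated in the paragraph following Definition~\ref{D:graft} — and that $\widetilde{S \diamond S'} = \widetilde{S'} \diamond \widetilde{S}$ is Equation~\eqref{eq:graft-tilde}; these are the two facts doing all the work.
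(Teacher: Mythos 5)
Your proposal is correct and follows essentially the same route as the paper's proof: induction on $|v|$ using the identity $(2)$ and associativity of $\diamond$ for part (i), and induction on $|w|$ using part (i), Equation~\eqref{eq:graft-tilde} and the palindromicity of $\RC(a)$ for part (ii). Your explicit verification that $\RC(a)=\widetilde{\RC(a)}$ is in fact slightly more careful than the paper, which uses this fact silently.
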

Lemma~\ref{L:cwt}~\eqref{L:cwti} shows that $RC:\Sigma^*\to \Sub$  is a monoid morphism. We can easily verify that it is in fact an isomorphism.

\begin{lemma}\label{L:reading}
    Let $w$ be a binary word on $\{0,1\}$, $a \in \{0,1\}$ and $3 \leq i \leq |w| + 3$.
    Then
    \begin{enumerate}[\rm(i)]
        \item $\Left_i(\RC(w)) = \RC(\pref_{i-3}(w))$;\label{L:ri}
        \item $\Right_i(\RC(w)) = \RC(\suff_{i-3}(w))$;\label{L:rii}
        \item $\size{\RC(w)} = |w|+3$; \label{L:size}
        \item $\leaf{\RC(wa)} = \leaf{\RC(w)} + a$;\label{L:riii}
        \item $\leaf{\RC(w)} = |w|_1 + 2$;\label{L:riv}
        \item $\leaf{\Left_i(\RC(w))} = |\pref_{i-3}(w)|_1+2$;\label{L:rV}
        \item $\leaf{\Right_i(\RC(w))} = |\suff_{i-3}(w)|_1+2$.\label{L:rvi}
    \end{enumerate}
\end{lemma}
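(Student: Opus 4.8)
The plan is to prove all seven items by induction on $|w|$, following the recursive structure of Definition~\ref{D:reading}, while leaning on the already-established morphism properties of $\RC$ (Lemma~\ref{L:cwt}) and on Lemma~\ref{l:graft}. Several of the items are really corollaries of the first few, so I would organize the argument so that each item feeds the next rather than redoing inductions from scratch.

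First I would establish (iii), $\size{\RC(w)} = |w| + 3$, by a short induction: the base case $\RC(\varepsilon) = (2)$ has size $1 + 2 = 3$, and the inductive step uses Equation~\eqref{EQ:n}, since $\RC(wa) = \RC(w) \diamond (1,1)$ or $\RC(w)\diamond(3)$, and both $(1,1)$ and $(3)$ have size $4$, so $\size{\RC(wa)} = \size{\RC(w)} + 4 - 3 = |w| + 4 = |wa| + 3$. Next, (iv) is immediate from Equation~\eqref{EQ:reading}: appending $0$ replaces $r_k$ by $(r_k-1,1)$, leaving the leaf count unchanged, while appending $1$ replaces $r_k$ by $r_k+1$, increasing the leaf count by one; in both cases $\leaf{\RC(wa)} = \leaf{\RC(w)} + a$. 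Then (v) follows by induction on $|w|$ from (iv): $\leaf{\RC(\varepsilon)} = \leaf{(2)} = 2 = |\varepsilon|_1 + 2$, and $\leaf{\RC(wa)} = \leaf{\RC(w)} + a = |w|_1 + 2 + a = |wa|_1 + 2$.

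For (i), $\Left_i(\RC(w)) = \RC(\pref_{i-3}(w))$, I would induct on $|w| - (i-3)$, i.e.\ on how far $i$ is below $|\RC(w)| = |w|+3$. When $i = |w|+3$ both sides equal $\RC(w)$ by definition of $\Left$ and since $\pref_{|w|}(w) = w$. For the inductive step, write $w = ua$; one checks that $\Left_i(\RC(ua))$ applied to $\RC(u)\diamond(1,1)$ (if $a=0$) or $\RC(u)\diamond(3)$ (if $a=1$) performs exactly one step of the recursion~\eqref{EQ:left} and collapses the last block back to $\RC(u)$, reducing to $\Left_i(\RC(u))$, which by induction equals $\RC(\pref_{i-3}(u)) = \RC(\pref_{i-3}(w))$ when $i-3 \leq |u|$. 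The case $a=0$ needs a little care because $\RC(u)$ might already end in a $1$, so appending $(1,1)$ via the graft produces a last entry of the form $s_k + 1 - 2 = s_k - 1$ followed by $1$; one must verify that a single application of the $s_k=1$ or $s_k\geq 2$ clause of~\eqref{EQ:left} undoes precisely this. Item (ii) then follows from (i) together with Lemma~\ref{L:subs}\eqref{L:subsiii} and Lemma~\ref{L:cwt}\eqref{L:cwtii}: $\Right_i(\RC(w)) = \widetilde{\Left_i(\widetilde{\RC(w)})} = \widetilde{\Left_i(\RC(\widetilde w))} = \widetilde{\RC(\pref_{i-3}(\widetilde w))} = \RC(\widetilde{\pref_{i-3}(\widetilde w)}) = \RC(\suff_{i-3}(w))$. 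Finally (vi) and (vii) are immediate: combine (i) with (v) applied to the word $\pref_{i-3}(w)$ (whose reading caterpillar has size $i$), giving $\leaf{\Left_i(\RC(w))} = \leaf{\RC(\pref_{i-3}(w))} = |\pref_{i-3}(w)|_1 + 2$, and likewise (vii) from (ii) and (v).

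The main obstacle I anticipate is the bookkeeping in the inductive step of (i): the graft $\RC(u)\diamond(1,1)$ merges the last spine vertex of $\RC(u)$ with the first of $(1,1)$, which can create an entry equal to $0$ or decrement an existing entry, and one has to match this up exactly against the two-case recursion defining $\Left$. This is the step where an off-by-one or a missed edge case is most likely, so I would state and prove a small auxiliary claim — that $\Left_{|RC(ua)|-1}(\RC(ua)) = \RC(u)$ for $a \in \{0,1\}$ — and then iterate it, rather than trying to reason about arbitrary $i$ in one stroke. Everything else is routine once (i)--(v) are in place, and the reversal trick keeps us from having to repeat the delicate argument on the right side.
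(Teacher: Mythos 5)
Your proposal is correct and follows essentially the same route as the paper: (iii)--(v) by short inductions on $|w|$ using Equation~\eqref{EQ:reading}, (i) by an induction whose key step is exactly your auxiliary claim that one application of the $\Left$ recursion undoes the last graft (the paper phrases this as the case split $r_k=1$ versus $r_k\geq 2$, which corresponds to the appended letter being $0$ or $1$), (ii) by the reversal trick through Lemmas~\ref{L:cwt}\eqref{L:cwtii} and~\ref{L:subs}\eqref{L:subsiii}, and (vi)--(vii) by combining the earlier items. The edge case you flag for $a=0$ resolves exactly as you suspect, since appending $0$ always makes the last spine entry equal to $1$, so the $s_k=1$ clause of Equation~\eqref{EQ:left} restores $\RC(u)$ in one step.
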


The proof of Lemma~\ref{L:reading} is found in~\ref{sec:appendix}.

\section{Caterpillars and prefix normal words}\label{sec:words}

We are now in a position to answer Problem~\ref{prob:cat} and to explicitly describe the connection between caterpillars and prefix normal words.

\begin{definition}[\cite{fici}]
    A binary word $u$ on the alphabet $\{0,1\}$ is called \emph{prefix normal} if for any prefix $p \in \Pref(u)$ and any factor $f \in \Fact(u)$, the condition $|p| = |f|$ implies $|p|_1 \geq |f|_1$. We denote by $\PNW$ the set of prefix normal words.
\end{definition}

In what follows, we see that the restriction $\RC|_{\PNW}$ of constructing the reading caterpillar sequence of a prefix normal word
is the right inverse of the operation
$$\begin{array}{ccccccccccccc}
  \Delta L & : & \Sub & \to     & \PNW \\
           &   & S    & \mapsto & \Delta L_S
\end{array}$$
of constructing the leaf word of a caterpillar sequence.
In other words, we show in the next pages that
$$\Delta L \circ \RC|_{\PNW} = I_{\PNW}.$$
However, notice that $\RC \circ \Delta L$ is different from the identity $I_{\Sub}$.

More precisely, we prove the following theorem.
\begin{theorem}\label{T:caterpillars}
    Let $L :\{0,1,2,\dots, n\} \to \N$ be a function such that $L(0) = 0$, $L(1) = 0$, $L(2) = 2$ and $L(3) = 2$. Then there exists a caterpillar $C$ such that $L=L_C$  if and only if $\Delta L$ is a prefix normal word.
\end{theorem}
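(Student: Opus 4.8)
The plan is to prove the two directions separately, using the tools developed in Sections~\ref{sec:caterpillar_sequences} as the engine.

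\medskip
\emph{($\Leftarrow$) Suppose $\Delta L$ is a prefix normal word $w$.}
First I would set $C$ to be the caterpillar associated to the reading caterpillar sequence $\RC(w)$, so that $|C| = |w| + 3 = n$ by Lemma~\ref{L:reading}(iii). It then remains to check that $L_C = L$, or equivalently, since both functions agree at $0,1,2,3$ and the alphabet of $\Delta L_C$ is $\{0,1\}$ by Lemma~\ref{prop:alph-graph}, that $\Delta L_C = w$. By Equation~\eqref{EQ:LDL} this amounts to showing $L_C(i+3) - L_C(i+2) = w_i$ for all relevant $i$, and more generally it suffices to prove
$$L_C(i) = |\pref_{i-3}(w)|_1 + 2 \quad \text{for all } 3 \le i \le n.$$
Now $L_C(i) = \max\{\leaf{S'} : S' \preceq \RC(w),\ |S'| = i\}$ by Equation~\eqref{EQ:leaf-function-seq}. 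The lower bound $L_C(i) \ge |\pref_{i-3}(w)|_1 + 2$ is immediate: take $S' = \Left_i(\RC(w)) = \RC(\pref_{i-3}(w))$, which is a caterpillar subsequence of size $i$ by Lemma~\ref{L:subs}(i) and Lemma~\ref{L:reading}(i), and whose leaf count is $|\pref_{i-3}(w)|_1 + 2$ by Lemma~\ref{L:reading}(vi). The reverse inequality is where prefix normality of $w$ is used: given any caterpillar subsequence $S' \preceq \RC(w)$ of size $i$, I would argue that $\leaf{S'}$ is bounded above by the number of $1$'s in \emph{some} factor of $w$ of length $i - 3$ (the factor of $w$ "spanned" by the embedding of $S'$ into $\RC(w)$ witnessing $S' \preceq \RC(w)$, via Definition~\ref{D:subsequence} and the correspondence $\RC \colon \Sigma^* \to \Sub$), plus $2$; then prefix normality gives $|f|_1 \le |\pref_{i-3}(w)|_1$ for that factor $f$, closing the argument.

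\medskip
\emph{($\Rightarrow$) Suppose $L = L_C$ for some caterpillar $C$.}
By Lemma~\ref{prop:alph-graph}, $\Delta L = \Delta L_C$ is a binary word $w$; I must show it is prefix normal, i.e. for every factor $f$ of $w$ with $|f| = k$ we have $|f|_1 \le |\pref_k(w)|_1$. Write $C$ as a caterpillar sequence $S$. A factor $f$ of $w = \Delta L_C$ of length $k$ sitting at position $j$ corresponds, via Equation~\eqref{EQ:LDL}, to the difference $L_S(j+k+3) - L_S(j+3) = |f|_1$; I would realize this difference by a pair of nested fully leafed induced subcaterpillars of sizes $j+3$ and $j+k+3$, use the graft decomposition (Lemma~\ref{l:graft}) to extract from the larger one an induced subcaterpillar of size $k+3$ sitting "in the middle", and conclude $L_S(k+3) \ge L_S(j+3) + |f|_1 = 2 + |f|_1$ — intuitively, deleting the left and right ends of a fully leafed subcaterpillar can only cost us leaves at the two cut points. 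Since $L_S(k+3) = 2 + |\pref_k(w)|_1$ by Equation~\eqref{EQ:LDL} applied at $i = 3$, this gives $|\pref_k(w)|_1 \ge |f|_1$, as desired.

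\medskip
\textbf{Main obstacle.}
The delicate step is the upper bound in the ($\Leftarrow$) direction (and its mirror image in ($\Rightarrow$)): translating an arbitrary caterpillar subsequence $S' \preceq \RC(w)$ into a genuine \emph{factor} of $w$ of the correct length whose $1$-count dominates $\leaf{S'} - 2$. The subtlety is that $S' \preceq \RC(w)$ only asserts a pointwise inequality on spine-degree sequences over some window (Definition~\ref{D:subsequence}), so the window has length $|S'|$ while the corresponding factor of $w$ should have length $|S'| - 3$, and one must be careful that trimming the spine at both ends — rather than just reading off the window verbatim — is what produces a legitimate factor of $w$ and does not undercount the leaves. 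Handling the boundary cases ($a = 0$ or $b = k+1$ in Lemma~\ref{L:alpha-beta}, i.e. when the subcaterpillar's spine reaches an end of $C$) cleanly is the fiddly part; I expect to invoke Lemma~\ref{l:graft} and Lemma~\ref{L:reading} repeatedly to keep the bookkeeping honest.
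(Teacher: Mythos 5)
There is a genuine gap in your ($\Leftarrow$) direction, and it sits exactly where you flag ``the main obstacle'': the upper bound $L_{\RC(w)}(i)\le|\pref_{i-3}(w)|_1+2$. You propose to bound $\leaf{S'}$ for an arbitrary $S'\preceq\RC(w)$ of size $i$ by $|f|_1+2$ for ``the factor $f$ of $w$ spanned by the embedding'', but no such factor is handed to you by Definition~\ref{D:subsequence}: the embedding gives a window of $k'$ spine positions, and the letters of $w$ naturally associated with that window form a factor of length roughly $(k'-1)+(\text{number of its }1\text{'s})$, which exceeds $i-3=(k'-1)+(\leaf{S'}-2)$ whenever $S'$ sits strictly below $\RC(w)$ at interior spine positions. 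Passing from this over-long factor to one of length exactly $i-3$ that still carries at least $\leaf{S'}-2$ ones is the entire technical content of the theorem, and Lemmas~\ref{l:graft} and~\ref{L:reading} do not do it for you --- they only handle subsequences already of the form $\Left_i(\Right_j(\cdot))$. The paper closes this gap in two different ways: Proposition~\ref{prop:pn_implies_cat} runs an induction on $|w|$, writing $w=ua$ and splitting into the cases $S\preceq\RC(u)$, $S=\Right_i(\RC(w))$ (the only place prefix normality is used), and a residual case resolved by shifting one leaf of $S$ to obtain an equi-leafed subsequence of $\RC(u)$; alternatively, Lemma~\ref{lem:fully_is_left_right} shows by an explicit leaf-redistribution argument that a fully leafed subsequence can always be chosen of the form $\Left_i(\Right_j(\RC(w)))=\RC(u)$ for a genuine factor $u$ (Corollary~\ref{coro:fully_is_left_right}). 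Your plan needs one of these arguments (or an equivalent) spelled out; as written, the key lemma is asserted, not proved.

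Your ($\Rightarrow$) direction is essentially the paper's Proposition~\ref{prop:cat_implies_pn} and the underlying inequality chain is correct: take one fully leafed subsequence $T$ of size $j+k+3$, factor $T=\Left_{j+3}(T)\diamond\Right_{k+3}(T)$ by Lemma~\ref{l:graft}, and combine Equation~\eqref{EQ:n1} with $\leaf{\Left_{j+3}(T)}\le L_C(j+3)$ to get $L_C(k+3)\ge\leaf{\Right_{k+3}(T)}\ge|f|_1+2$, whence $|\pref_k(w)|_1\ge|f|_1$ by Equation~\eqref{EQ:LDL}. Two slips to fix, though: you do not need (and cannot in general produce) a \emph{nested pair of fully leafed} subcaterpillars --- only the larger one need be fully leafed, the smaller piece being controlled by the definition of $L_C$; and the displayed conclusion ``$L_S(k+3)\ge L_S(j+3)+|f|_1=2+|f|_1$'' is false as written since $L_S(j+3)\ne 2$ in general --- the correct statement is $L_S(k+3)\ge |f|_1+2$, compared against $L_S(k+3)=|\pref_k(w)|_1+2$.
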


The proof of Theorem~\ref{T:caterpillars} is divided into Propositions~\ref{prop:pn_implies_cat} and~\ref{prop:cat_implies_pn}.

\begin{proposition}\label{prop:pn_implies_cat}
    Let $w$ be a prefix normal word and $3 \leq i \leq |w| + 3$. Then
    \begin{enumerate}[\rm(i)]
    	\item $ L_{\RC(w)}(i) = \leaf{\Left_i(\RC(w))};$ \label{prop:pn_part+_1}
    	\item $\Delta L_{\RC(w)} = w$.
    \end{enumerate}
\end{proposition}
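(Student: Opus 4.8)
The plan is to prove part (i) first and then derive part (ii) as an easy consequence. For part (i), the inequality $L_{\RC(w)}(i) \geq \leaf{\Left_i(\RC(w))}$ is immediate from Lemma~\ref{L:subs}(i), which gives $\Left_i(\RC(w)) \preceq \RC(w)$, together with the fact that $\size{\Left_i(\RC(w))} = i$ and Equation~\eqref{EQ:leaf-function-seq}. The real content is the reverse inequality: among all caterpillar subsequences $S'$ of $\RC(w)$ of size $i$, the left subsequence $\Left_i(\RC(w))$ maximizes the number of leaves. So I would take an arbitrary $S' \preceq \RC(w)$ with $\size{S'} = i$ and show $\leaf{S'} \leq \leaf{\Left_i(\RC(w))}$.

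The key idea is to compare $\leaf{S'}$ with the prefix of $w$ of the appropriate length. Write $\RC(w) = (r_1,\ldots,r_k)$ with $k = |w|+1$. By Definition~\ref{D:subsequence}, since $S' \preceq \RC(w)$, there is a shift $t$ so that $\spine(S')[j] \leq \spine(\RC(w))[j+t]$ for all $j$. Using $\leaf{\cdot}$ in terms of $\spine$ (namely $\leaf{S'}$ is essentially the sum of the spine degrees minus the appropriate correction from the endpoints and internal vertices), one sees that shifting $S'$ to the left as far as possible can only help: replacing the shift $t$ by $0$ and then pushing all the ``slack'' to the right produces exactly $\Left_i(\RC(w))$. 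Concretely, I would argue that for any position window of length $|S'|$ in $\RC(w)$, the number of leaves of the induced subcaterpillar supported on that window is maximized by taking the window to start at the left end (because $\RC(w)$, being a reading caterpillar of a prefix normal word, has its ``heaviest'' prefixes at the front), and that within a fixed window the greedy left subsequence $\Left_i$ is optimal. This is where the prefix normality of $w$ is essential: by Lemma~\ref{L:reading}(vi) and (vii), $\leaf{\Left_i(\RC(w))} = |\pref_{i-3}(w)|_1 + 2$ and any right-anchored or internal subcaterpillar of size $i$ has leaf count bounded by $|f|_1 + 2$ for some factor $f$ of $w$ of length $i-3$, and prefix normality gives $|\pref_{i-3}(w)|_1 \geq |f|_1$. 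The slightly delicate point is handling subcaterpillars that are neither left- nor right-anchored, i.e. genuine ``middle'' windows; for these I would use the fact (provable from Lemma~\ref{L:alpha-beta} and the structure of $\spine$) that the leaf count of any size-$i$ caterpillar subsequence sitting inside a window is at most $|f|_1 + 2$ where $f$ is the corresponding length-$(i-3)$ factor of $w$, reducing everything to a statement about factors versus prefixes, which is exactly prefix normality.

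Once part (i) is established, part (ii) follows quickly. By Definition~\ref{D:leaf word}, the $i$-th letter of $\Delta L_{\RC(w)}$ is $L_{\RC(w)}(i+3) - L_{\RC(w)}(i+2)$. Using part (i) twice, this equals $\leaf{\Left_{i+3}(\RC(w))} - \leaf{\Left_{i+2}(\RC(w))}$, which by Lemma~\ref{L:reading}(vi) equals $(|\pref_i(w)|_1 + 2) - (|\pref_{i-1}(w)|_1 + 2) = |\pref_i(w)|_1 - |\pref_{i-1}(w)|_1 = w_i$, the $i$-th letter of $w$. Since $\size{\RC(w)} = |w|+3$ by Lemma~\ref{L:reading}(iii), the word $\Delta L_{\RC(w)}$ has length $|w|+3-3 = |w|$, so $\Delta L_{\RC(w)} = w$ exactly.

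I expect the main obstacle to be the reverse inequality in part (i), and specifically the bookkeeping that translates ``$S' \preceq \RC(w)$ of size $i$'' into ``a factor of $w$ of length $i-3$ together with a count of at most $|f|_1+2$ leaves.'' The cleanest route is probably to first prove that $L_{\RC(w)}(i) = \max\{|f|_1 : f \in \Fact_{i-3}(w)\} + 2$ for \emph{any} binary word $w$ (not using prefix normality), by relating caterpillar subsequences of $\RC(w)$ to factors via the morphism $\RC$ and Lemma~\ref{L:cwt}; and then invoke prefix normality of $w$ to identify the maximizing factor as the prefix $\pref_{i-3}(w)$, giving $L_{\RC(w)}(i) = |\pref_{i-3}(w)|_1 + 2 = \leaf{\Left_i(\RC(w))}$ by Lemma~\ref{L:reading}(vi).
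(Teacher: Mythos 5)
Your plan is sound and part (ii) matches the paper's argument exactly, but for part (i) you take a genuinely different route. The paper proves the reverse inequality by induction on $|w|$: writing $w=ua$, it assumes a size-$i$ subsequence $S$ with $\leaf{S}>\leaf{\Left_i(\RC(w))}$ and splits into three cases ($S\preceq\RC(u)$, handled by the induction hypothesis; $S=\Right_i(\RC(w))$, handled by prefix normality; and the remaining case, where a leaf is moved from the last spine vertex to an earlier one to produce an equally leafed $S'\preceq\RC(u)$, reducing to the first case). You instead propose the non-inductive identity $L_{\RC(w)}(i)=\max\{|f|_1: f\in\Fact_{i-3}(w)\}+2$ for arbitrary binary $w$, and then let prefix normality single out the prefix as the maximizing factor. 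That identity is true, and your route is arguably cleaner conceptually --- but be aware that it is precisely the content of the paper's later Lemma~\ref{lem:word_to_tree}, Lemma~\ref{lem:fully_is_left_right} and Corollary~\ref{coro:fully_is_left_right} (used there to prove Theorem~\ref{thm:equiv}), and the upper-bound half of it is not a routine consequence of Lemma~\ref{L:alpha-beta}: one must show that an arbitrary fully leafed subsequence can have its endpoint ``slack'' redistributed into one of the form $\Left_i(\Right_j(\RC(w)))=\RC(u)$ without losing leaves, which is the leaf-redistribution argument of Lemma~\ref{lem:fully_is_left_right}. So your proposal effectively front-loads Section~6's machinery; what it buys is that Proposition~\ref{prop:pn_implies_cat} and Theorem~\ref{thm:equiv} then share one structural lemma instead of two separate arguments, at the cost of proving that harder lemma up front. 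Two small corrections: the length of $\RC(w)$ as a sequence is $|w|_0+1$, not $|w|+1$ (only $0$'s create new spine vertices); and the parenthetical claim that the left window is heaviest ``because $w$ is prefix normal'' conflates the two steps --- the reduction to factors needs no prefix normality, as your final paragraph correctly states.
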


\begin{proof}
		(i) It is immediate that $L_{\RC(w)}(i) \geq \leaf{\Left_i(\RC(w))}$, since $\Left_i(\RC(w))$ is a caterpillar subsequence of size $i$ of $\RC(w)$.
		Thus we  only need to prove that 
		$$L_{\RC(w)}(i) \leq \leaf{\Left_i(\RC(w))}.$$
		We proceed by induction on $|w|$.
		
		\textsc{Basis}. If $|w| = 0$, then $w = \varepsilon$ and $i = 3$ 
		so that
		\begin{align*}
		L_{\RC(w)}(i) 
		& = L_{\RC(\varepsilon)}(3) = 2 \\
		& = \leaf{\Left_3(\RC(\varepsilon))} = \leaf{\Left_i(\RC(w))} \\
		& \leq \leaf{\Left_i( \RC(w))}.
		\end{align*}
		
		\textsc{Induction.} Since $|w| > 0$, there exist a word $u$ and a letter $a$ such that $w = ua$.
Assume first that $i = |w| + 3$. Then Equation~\eqref{EQ:leaf-function-seq} implies $L_{\RC(w)}(i) = \leaf{\Left_i(\RC(w))}$ since $\Left_i(\RC(w))=\RC(w)$ is the only caterpillar subsequence of size $|w| + 3$ of $\RC(w)$.

		It remains to consider the case $i < |w| + 3$. Arguing by contradiction, assume that there exists a caterpillar subsequence $S$ of $\RC(w)$ of size $i$ such that $\leaf{S} > \leaf{\Left_i(\RC(w))}$.
		If $S \preceq \RC(u)$, then
		\begin{alignat}{2}
		\leaf{S} & > \leaf{\Left_i(\RC(w))} \label{EQ:deb} \\
		& = \leaf{\Left_i(\RC(u))}
		&& \qquad \mbox{(since $i < |w| + 3$)} \\
		& = L_{\RC(u)}(i)
		&& \qquad \mbox{(by induction hypothesis)}\label{EQ:fin}
		\end{alignat}
		contradicting the maximality of $L_{\RC(u)}(i)$. Hence, $S \not\preceq \RC(u)$.
		
		Next, assume that $S = \Right_i(\RC(w))$. Then we have
		\begin{align*}
		|\pref_{i-3}(w)|_1 + 2 &= \leaf{\Left_i(\RC(w))} \\
		& < \leaf{S} = \leaf{\Right_i(\RC(w))} = |\suff_{i-3}(w)|_1 + 2,
		\end{align*}
		i.e.\ $|\pref_{i-3}(w)|_1 < |\suff_{i-3}(w)|_1$, contradicting the assumption that $w$ is prefix normal.
		
		To conclude, assume that $S$ is neither a caterpillar subsequence of $\RC(u)$ nor a right caterpillar subsequence of $\RC(w)$.
Let $\RC(w) = (r_1,r_2,\ldots,r_k)$ and $S = (s_1,s_2,\ldots,s_{k'})$.
Since $S \preceq \RC(w)$ but $S \not\preceq \RC(u)$, we have $k' \leq k$ and $s_j \leq r_{j+k-k'}$ for $j = 1,2,\ldots,k'$.
Let $j$ be the largest index such that $s_j < r_{j+k-k'}$ (such an index $j$ exists since $S \neq \Right_i(\RC(w))$) and let
		$$S' = (s_1,s_2,\ldots,s_{j-1},s_j + 1,s_{j+1},\ldots,s_{k'} - 1).$$
		Clearly, $S' \preceq \RC(u)$,
		but $\leaf{S'} = \leaf{S}$ so that a  sequence of relations similar to relations \eqref{EQ:deb}--\eqref{EQ:fin} obtained by replacing $S$ by $S'$ leads to a contradiction, concluding the proof.
    
   \noindent (ii) For $1 \leq i \leq |w|$, we have
    \begin{align*}
    \Delta &L_{\RC(w)}(i) \\
    & = L_{\RC(w)}(i+3) - L_{\RC(w)}(i+2)
    && \qquad \mbox{(by Definition~\ref{D:leaf word})} \\
    & = \leaf{\Left_{i+3}(\RC(w))} - \leaf{\Left_{i+2}(\RC(w))}
    && \qquad \mbox{(by Proposition~\ref{prop:pn_implies_cat}~\eqref{prop:pn_part+_1})} \\
    & = (|\pref_{i}(w)|_1 + 2) - (|\pref_{i-1}(w)|_1 + 2)
    && \qquad \mbox{(by Lemma~\ref{L:reading} \eqref{L:rV})} \\
    & = w_{i},
    \end{align*}
    as claimed.
\end{proof}

The following simple observation about non prefix normal words is the key to proving the second part of Theorem~\ref{T:caterpillars}.

\begin{lemma}\label{lem:npn}
    Let $w$ be a binary word on $\{0,1\}$ that is non prefix normal. Then there exist two abelian equivalent words $u$ and $u'$, such that $u0 \in \Pref(w)$ and $1u' \in \Fact(w)$.
\end{lemma}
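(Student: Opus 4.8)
The statement to prove is Lemma~\ref{lem:npn}: if $w$ is not prefix normal, then there exist abelian equivalent words $u$ and $u'$ with $u0 \in \Pref(w)$ and $1u' \in \Fact(w)$. The plan is to unwind the definition of ``non prefix normal'' and then repair the witnessing prefix/factor pair so that the prefix ends in $0$ and the factor begins with $1$, while keeping the two words abelian equivalent (in the binary setting, abelian equivalent just means the same length and the same number of $1$'s).

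\textbf{Key steps.} First I would recall the negation of the defining property: since $w$ is not prefix normal, there is a length $k$, a prefix $p = \pref_k(w)$ and a factor $f \in \Fact_k(w)$ with $|p|_1 < |f|_1$. Among all such bad pairs, choose one with $k$ \emph{minimal}. I claim that for this minimal $k$ we necessarily have $p = u0$ for some word $u$ and $f = 1u'$ for some word $u'$. Indeed, if the last letter of $p$ were $1$, then removing the last letter of both $p$ and $f$ would decrease $|p|_1$ by exactly $1$ and $|f|_1$ by at most $1$, so $\pref_{k-1}(w)$ together with the length-$(k-1)$ factor $\pref_{k-1}(f)$ (which is still a factor of $w$) would still satisfy $|\pref_{k-1}(w)|_1 < |\pref_{k-1}(f)|_1$, contradicting minimality of $k$ (here one also uses $k \geq 1$, which holds since the empty prefix and empty factor cannot form a bad pair). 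Symmetrically, if the first letter of $f$ were $0$, then deleting the first letter of $f$ and the \emph{last} letter of $p$ again produces a bad pair of length $k-1$ (the factor obtained by deleting a letter from $f$ is still a factor of $w$; the number of $1$'s in $f$ is unchanged while the number in $p$ drops by at most $1$), contradicting minimality. Hence $p = u0$ and $f = 1u'$ for suitable $u, u'$.

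It remains to see that $u$ and $u'$ can be taken abelian equivalent. They already have the same length $k-1$. For the count of $1$'s: from $|p|_1 < |f|_1$ we get $|u|_1 = |p|_1 < |f|_1 = 1 + |u'|_1$, hence $|u|_1 \leq |u'|_1$. If $|u|_1 = |u'|_1$ we are done. If $|u|_1 < |u'|_1$, then $u'$ contains strictly more $1$'s than $u$, and I would replace $u'$ by a suitable length-$(k-1)$ factor with exactly $|u|_1$ ones: either argue that sliding a window of length $k-1$ through $w$ changes the $1$-count by at most one at each step, so some window hits the value $|u|_1$ — but one must be careful that the replacement factor still begins with $1$. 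A cleaner route: start instead from a bad pair $(p,f)$ chosen with $k$ minimal \emph{and then}, among those, with $|f|_1 - |p|_1$ minimal; I claim this difference is exactly $1$. If it were $\geq 2$, delete the first letter of $f$: the resulting factor $g$ of length $k-1$ satisfies $|g|_1 \geq |f|_1 - 1 > |p|_1 \geq |\pref_{k-1}(w)|_1$ if the first letter of $f$ is $1$ (giving a bad pair of length $k-1$, contradicting minimality of $k$), while if the first letter of $f$ is $0$ then $|g|_1 = |f|_1$ and pairing $g$ with $\pref_{k-1}(w)$ again contradicts minimality of $k$. So the first letter of $f$ is $1$ and $|f|_1 - |p|_1 = 1$; combined with the argument of the previous paragraph forcing $p$ to end in $0$, we get $p = u0$, $f = 1u'$, $|u| = |u'| = k-1$ and $|u|_1 = |p|_1 = |f|_1 - 1 = |u'|_1$, i.e.\ $u$ and $u'$ are abelian equivalent, as required.

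\textbf{Main obstacle.} The only delicate point is bookkeeping the effect of trimming a letter from the \emph{factor} versus from the \emph{prefix}: deleting a letter from a factor of $w$ always yields another factor of $w$, but deleting a letter from the prefix $\pref_k(w)$ must be done at the \emph{end} to again yield a prefix of $w$, and one must track the resulting change in $1$-counts in each case (exactly $1$ when the deleted letter is a $1$, exactly $0$ when it is a $0$). Setting up the double minimality (first on $k$, then on $|f|_1 - |p|_1$) is what makes these case analyses terminate cleanly; without it one risks a non-terminating "repair" loop. Everything else is the elementary observation that in a binary alphabet abelian equivalence is just equality of length and of the number of $1$'s.
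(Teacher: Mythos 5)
Your proof is correct and takes essentially the same route as the paper: pick a bad prefix/factor pair $(p,f)$ of minimal length and inspect the last letter of $p$ and the first letter of $f$. The paper obtains all three conclusions at once from the single inequality $|u|_1 \geq |u'|_1$, which minimality of $k$ already gives for free (since $u=\pref_{k-1}(w)$ and $u'$ is a factor of length $k-1$), so your secondary minimization of $|f|_1-|p|_1$ and the separate case analyses are sound but not needed.
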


\begin{proof}
    Let $w$ be a non prefix normal word. then there exists at least one prefix $p$ and one factor $f$ of $w$ of the same length such that $|p|_1 < |f|_1$.
    Without loss of generality,  assume that $|p|$ and $|f|$ are as small as possible. Let $p = ua$ and $f = bu'$ for some letters $a,b\in\{ 0,1\}$. Since $|p|$ and $|f|$ are minimal, we have $|u|_1 \geq |u'|_1$.
    Therefore,
    $$|u|_1 + a = |ua|_1 = |p|_1 < |f|_1 = |bu'|_1 = |u'|_1 + b \leq |u|_1 + b,$$
    which can only be verified when $a = 0$ and $b = 1$, in which case $|u|_1 = |u'|_1$.
\end{proof}

We are now ready to describe the structure of the leaf sequences of caterpillars.
\begin{proposition}\label{prop:cat_implies_pn}
    Let $C$ be a caterpillar. Then $\Delta L_C$ is prefix normal.
\end{proposition}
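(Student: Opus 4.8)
The plan is to argue by contradiction: suppose $C$ is a caterpillar whose leaf word $\Delta L_C$ is not prefix normal, and derive a contradiction from the maximality inherent in the definition of $L_C$. Write $w = \Delta L_C$. Since $\RC$ is an isomorphism $\Sigma^* \to \Sub$ (Lemma~\ref{L:cwt}), and since $C$ is a caterpillar, its sequence $S$ satisfies $|S| = n$; comparing with $|\RC(w)| = |w| + 3 = (n-3) + 3 = n$ via Lemma~\ref{L:reading}\eqref{L:size}, I want to relate $S$ directly to $\RC(w)$. The cleanest route is: first establish that $L_C(i) = L_{\RC(w)}(i)$ for all relevant $i$, or at least that the two caterpillar sequences have the same leaf function; but since $\RC \circ \Delta L \neq I_\Sub$ in general, I should instead work with $C$ itself and use only that $\Delta L_C = w$.

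First I would apply Lemma~\ref{lem:npn} to the non prefix normal word $w$: there exist abelian equivalent words $u$ and $u'$ with $u0 \in \Pref(w)$ and $1u' \in \Fact(w)$. Set $m = |u| = |u'|$, so $|u0| = |1u'| = m+1$. Because $u0$ is a prefix of $w = \Delta L_C$ ending in $0$, Equation~\eqref{EQ:LDL} gives $L_C(m+3) - L_C(m+2) = |u0|_1 = |u|_1$, i.e.\ reading off the leaf function, $L_C((m+1)+3) = L_C(3) + |u0|_1 = 2 + |u|_1$ (since $\Alp(w) \subseteq \{0,1\}$ by Lemma~\ref{prop:alph-graph}, summing the first $m+1$ letters of $w$). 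Meanwhile, $1u'$ occurring as a factor of $w$ starting at some position $t+1$ means, again via \eqref{EQ:LDL}, that $L_C(t + (m+1) + 3) - L_C(t + 3) = |1u'|_1 = |u'|_1 + 1 = |u|_1 + 1$. The idea is now to exhibit an induced subtree of $C$ with $m+4$ vertices having at least $2 + |u|_1 + 1 = 3 + |u|_1$ leaves, which would contradict $L_C(m+4) = 2 + |u|_1$.

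The construction of that subtree is the heart of the argument and the step I expect to be the main obstacle. The factor $1u'$ corresponds, via the graft decomposition $C = \Left \diamond (\text{middle}) \diamond \Right$ and Lemma~\ref{l:graft}, to a sub-caterpillar of $C$ "sitting over" spine positions $t+1$ through $t+m+1$; concretely, one should take the reading caterpillar $\RC(1u')$ and realize it as a caterpillar subsequence of $S = \RC(w)$ using that $\RC$ is a monoid morphism (Lemma~\ref{L:cwt}\eqref{L:cwti}), $S = \RC(\pref_t(w)) \diamond \RC(1u') \diamond \RC(\suff(w))$, together with the fact that $S', S'' \preceq S' \diamond S''$. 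By Lemma~\ref{L:reading}\eqref{L:riv}, $\leaf{\RC(1u')} = |1u'|_1 + 2 = |u|_1 + 3$ and $|\RC(1u')| = |1u'| + 3 = m + 4$. So $\RC(1u')$ is a caterpillar subsequence of $S$ of size $m+4$ with $|u|_1 + 3$ leaves, whence $L_C(m+4) \geq |u|_1 + 3 > |u|_1 + 2 = L_C(m+4)$, the desired contradiction. The delicate points to get right are: (a) confirming $|S| = n$ so that $S = \RC(\Delta L_C)$ is not forced but the index bookkeeping in \eqref{EQ:LDL} and Lemma~\ref{L:reading} lines up — actually it is cleaner to avoid claiming $S = \RC(w)$ and instead directly produce the subsequence inside $S$; (b) justifying that $\RC(1u')$ genuinely embeds as a caterpillar subsequence of $S$, which follows from the morphism property plus $S' \preceq S' \diamond S''$, but one must check the factorization $w = \pref_t(w)\,(1u')\,(\text{rest})$ indeed induces $S = \RC(\pref_t(w)) \diamond \RC(1u') \diamond \RC(\text{rest})$ and hence $\RC(1u') \preceq S$; and (c) computing $L_C(m+4) = 2 + |u|_1$ correctly from the prefix condition, being careful that the prefix $u0$ has length $m+1 \le |w| = n-3$ so that $m + 4 \le n$ and the value $L_C(m+4)$ is finite (not $-\infty$), which holds since $C$ is a tree and its leaf function is non-decreasing and finite up to index $n$ by Proposition~\ref{prop:non-decreasing}.
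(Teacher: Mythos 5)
Your overall strategy---argue by contradiction, invoke Lemma~\ref{lem:npn} to obtain $u0\in\Pref(w)$ and $1u'\in\Fact(w)$ with $|u|_1=|u'|_1$, compute $L_C(m+4)=|u|_1+2$ from Equation~\eqref{EQ:LDL}, and then exhibit a size-$(m+4)$ induced subtree with $|u|_1+3$ leaves---is exactly the paper's, and your steps (a) and (c) are fine. The gap is precisely at the step you flag as the heart of the argument, step (b). Your construction of the offending subtree embeds $\RC(1u')$ into the caterpillar sequence $S$ of $C$ via the factorization $S=\RC(\pref_t(w))\diamond\RC(1u')\diamond\RC(\mathrm{rest})$, and this presupposes $S=\RC(\Delta L_C)$. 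That identity fails in general (the paper warns that $\RC\circ\Delta L\neq I_{\Sub}$): for instance $S=(1,1,1)$ has $\Delta L_S=100$ but $\RC(100)=(2,0,1)\neq(1,1,1)$, and indeed $\RC(100)\not\preceq(1,1,1)$, so for the factor $1u'=100$ of $\Delta L_S$ the claimed embedding $\RC(1u')\preceq S$ is simply false. Your parenthetical remark that ``it is cleaner to avoid claiming $S=\RC(w)$ and instead directly produce the subsequence inside $S$'' identifies the right worry, but you never supply that direct production, and there is no way to do it along these lines: a factor of $\Delta L_C$ does not correspond to any identifiable ``middle piece'' of $C$ unless $C$ happens to be the reading caterpillar of its own leaf word.

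The paper circumvents this by never locating $1f$ structurally inside $C$. It fixes $i$ so that $i-3$ is the index of the \emph{last} letter of an occurrence of $1f$, takes a fully leafed caterpillar subsequence $S$ of $C$ with $\size{S}=i$ and $\leaf{S}=L_C(i)$, and splits it as $S=A\diamond B$ with $A=\Left_{i-|1f|}(S)$ and $B=\Right_{|1f|+3}(S)$ using Lemma~\ref{l:graft}. The bound $\leaf{A}\leq L_C(i-|1f|)$ together with Equations~\eqref{EQ:n1} and~\eqref{EQ:LDL} then forces $\leaf{B}\geq|1f|_1+2=|u|_1+3>L_C(|u0|+3)$, with $\size{B}=|u0|+3$, giving the contradiction. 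In other words, the argument is run entirely on the \emph{values} of the leaf function, and the extremal subtree witnessing $L_C(i)$ is split by the graft, rather than a prescribed subtree $\RC(1u')$ being injected into $C$. To repair your proof you would need to replace step (b) by this value-based splitting (or some equivalent device); as written, the construction does not go through.
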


\begin{proof}
    We proceed by contradiction and assume that $\Delta L_C$ is not prefix normal.
    By Lemma~\ref{lem:npn}, there exist two words $p$ and $f$, with $|p|_1 = |f|_1$ such that $p0 \in \Pref(\Delta L_C)$ and $1f \in \Fact(\Delta L_C)$.
    Let $i - 3$ be the index of the last letter of any occurrence of the factor $1f$ in $\Delta L_C$, i.e. the integer $i$ satisfies \ $1f = \suff_{|1f|}(\pref_{i-3}(\Delta L_C))$.
    Let $S$ be a caterpillar subsequence of a caterpillar sequence of $C$ such that $\leaf{S} = L_C(i)$ and $\size{S}=i$.
    Also, let $A = \Left_{i-|1f|}(S)$ and $B = \Right_{|1f| + 3}(S)$ so that, by Lemma~\ref{l:graft}, we have $S = A \diamond B$.
    
    Then
    \begin{alignat*}{2}
    \leaf{B} & = \leaf{S} - \leaf{A}+2 & & \qquad \mbox{(by Equation~\eqref{EQ:n1})}\\
    & = L_C(i) - \leaf{A} + 2 & & \qquad \mbox{(by  hypothesis)}\\
    & \geq L_C(i) - L_C(i-|1f|) + 2
    && \qquad \mbox{(by definition of $L$)} \\
    & = |\suff_{|1f|}(\pref_{i-3}(\Delta L_C))|_1 +2
    && \qquad \mbox{(by Equation~\eqref{EQ:LDL})}  \\
    & = |1f|_1 + 2
    && \qquad \mbox{(by hypothesis)} \\
    & = |f|_1 + 3 \\
    & = |p|_1 + 3 \\
    & > |p|_1 + 2 \\
    & = |p0|_1 + 2 \\
    & = L_C(|p0|+3)
    && \qquad \mbox{(by Equation~\eqref{EQ:LDL} with $i=3$)} \\
    \end{alignat*}
    which is absurd since $\size{B} = |p0| + 3$ and $B\preceq S$.
\end{proof}

\section{Caterpillars and prefix normal forms} \label{sec:forms}

In~\cite{Burcsi}, Burcsi et al. introduced an equivalence relation on binary words as follows. Let $w$ and $w'$ be two binary words on $\{0,1\}$. We write $w \equiv w'$, if $F_1(w,i)=F_1(w',i)$ for all $i\in\N$,
where $F_1 : \{0,1\}^*\times \N \to \N$ is the function that associates to a binary word $u$ and an integer $i$  the maximal number of $1$'s occurring in any factor of length $i$ of $u$:
$$F_1(u,i) = \max \{|v|_1 : v\in \Fact_i(u)\}.$$
It was shown by the same authors that there exists a unique prefix normal word in each equivalence class~\cite[Theorem~2]{Burcsi}.

We now consider an equivalence relation on graphs whose restriction to caterpillar graphs is essentially the same as $\equiv$ on binary words.
Given two graphs $G$ and $G'$, we say that $G$ and $G'$ are \emph{leaf-equivalent} if $L_G = L_{G'}$.
As discussed throughout this paper, caterpillar graphs and caterpillar sequences are essentially the same objects.
Hence, we are allowed to restrict our attention on caterpillar sequences, keeping in mind that their properties have a twin counterpart in the set of caterpillar graphs.
Therefore, given two caterpillar sequences $S$ and $S'$, we say that $S$ and $S'$ are \emph{leaf-equivalent} if $L_S = L_{S'}$.

\begin{theorem}\label{thm:equiv}
    Consider two binary words $w$ and $w'$.
    We have
    $$L_{\RC(w)}=L_{\RC(w')}\text{ if and only if }F_1(w,i)=F_1(w',i) \text{ for all } i\in\N.$$
\end{theorem}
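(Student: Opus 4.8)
The plan is to relate both sides of the equivalence to the leaf functions through the machinery built in Sections~\ref{sec:fully-leafed} and~\ref{sec:caterpillar_sequences}, and then to exploit Theorem~\ref{T:caterpillars} together with the result of Burcsi et al.\ that each $\equiv$-class contains a unique prefix normal word. First I would prove the easier direction: assume $F_1(w,i) = F_1(w',i)$ for all $i \in \N$, i.e.\ $w \equiv w'$. By \cite[Theorem~2]{Burcsi}, $w$ and $w'$ have the same prefix normal form $w''$. Since $w'' \in \PNW$, Proposition~\ref{prop:pn_implies_cat}~(ii) gives $\Delta L_{\RC(w'')} = w''$, and Proposition~\ref{prop:pn_implies_cat}~(i) gives the explicit formula $L_{\RC(w'')}(i) = \leaf{\Left_i(\RC(w''))} = |\pref_{i-3}(w'')|_1 + 2$ (using Lemma~\ref{L:reading}~\eqref{L:rV}). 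So it suffices to show that $L_{\RC(w)}$ is determined by $w$ only through the data $(F_1(w,i))_{i}$; I would do this by establishing a general formula for $L_{\RC(w)}(i)$ valid for every binary word $w$, not just prefix normal ones.

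The key lemma I would isolate is: for any binary word $w$ and any $3 \leq i \leq |w| + 3$,
\begin{equation*}
L_{\RC(w)}(i) = F_1(w, i-3) + 2.
\end{equation*}
The inequality $\geq$ follows because some factor $v$ of length $i-3$ realizing $F_1(w,i-3)$ can be ``read off'' as a caterpillar subsequence of $\RC(w)$ of size $i$ with $|v|_1 + 2$ leaves; more precisely, if $w = xvy$ then $\RC(v) \preceq \RC(w)$ by Lemma~\ref{L:cwt}~\eqref{L:cwti} (since $S, S' \preceq S \diamond S'$), and $\leaf{\RC(v)} = |v|_1 + 2$ by Lemma~\ref{L:reading}~\eqref{L:riv}, with $\size{\RC(v)} = i$ by Lemma~\ref{L:reading}~\eqref{L:size}. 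For the reverse inequality $\leq$, I would argue that any caterpillar subsequence $S$ of $\RC(w)$ of size $i$ can be ``pushed'' to lie inside $\RC$ of some factor of $w$ of length $i-3$ without decreasing its number of leaves, using the same local move as in the last paragraph of the proof of Proposition~\ref{prop:pn_implies_cat}~(i): if $S = (s_1, \ldots, s_{k'})$ sits under the window $r_{1+t}, \ldots, r_{k'+t}$ of $\RC(w) = (r_1, \ldots, r_k)$ but is not ``flush'' on one side, transfer a unit from an inner slot that is slack to an end slot, preserving $\leaf{S}$ and $\size{S}$; iterating, $S$ becomes the reading caterpillar sequence of a factor of $w$ of length exactly $i-3$. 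Once this lemma is in hand, the theorem is immediate in both directions: $L_{\RC(w)} = L_{\RC(w')}$ iff $F_1(w,j) + 2 = F_1(w',j) + 2$ for all $j$ in the common range, and for $j$ beyond $\min(|w|,|w'|)$ one handles the domains by noting $F_1(u,j) = |u|_1$ for $j \geq |u|$, so the condition propagates to all $j \in \N$ (one also needs $|w| = |w'|$, which follows since the domains of $L_{\RC(w)}$ and $L_{\RC(w')}$ must coincide, these being $\{0,1,\ldots,|w|+3\}$ and $\{0,1,\ldots,|w'|+3\}$ by Lemma~\ref{L:reading}~\eqref{L:size}).

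Alternatively, and perhaps more cleanly, I would route the proof through Theorem~\ref{T:caterpillars} and the $\equiv$-class structure directly, avoiding the explicit $F_1$ formula. For the ``if'' direction: $w \equiv w'$ implies $w$ and $w'$ share the prefix normal form $w''$; I would then show $L_{\RC(w)} = L_{\RC(w'')}$ for every $w$ (not merely prefix normal $w$), which is exactly the content of the lemma above specialized to comparing $w$ with its prefix normal form, since $F_1(w,\cdot) = F_1(w'',\cdot)$ by definition of prefix normal form and $\Delta L_{\RC(w'')} = w''$ with $L_{\RC(w'')}$ computed via Proposition~\ref{prop:pn_implies_cat}. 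For the ``only if'' direction: suppose $L_{\RC(w)} = L_{\RC(w')}$. Using the lemma, $F_1(w, i-3) = L_{\RC(w)}(i) - 2 = L_{\RC(w')}(i) - 2 = F_1(w', i-3)$ on the common range, and the tail argument extends this to all $i \in \N$.

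The main obstacle will be the $\leq$ direction of the key lemma, i.e.\ showing that an arbitrary caterpillar subsequence $S \preceq \RC(w)$ of size $i$ has $\leaf{S} \leq F_1(w, i-3) + 2$. The issue is that $S$ need not be of the form $\RC(v)$ for a factor $v$: its spine degrees, compared via $\spine$, sit \emph{underneath} a window of $\spine(\RC(w))$ but may be strictly smaller in several coordinates, including at the two ends, so it does not immediately correspond to reading a factor of $w$. The ``slack-transfer'' move must be set up carefully: one has to check that after transferring a leaf-unit toward an end of the window, the resulting sequence is still a valid caterpillar sequence (the end entries must stay $\geq 1$, handled by the case split already visible in Equations~\eqref{EQ:left}--\eqref{EQ:right}), that $\size{S}$ and $\leaf{S}$ are genuinely unchanged, and that the process terminates with $S$ flush against a length-$(i-3)$ window, at which point it equals $\RC(v)$ for the corresponding factor $v$ of $w$ and $\leaf{\RC(v)} = |v|_1 + 2 \leq F_1(w,i-3)+2$ by Lemma~\ref{L:reading}~\eqref{L:riv}. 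Making the termination and well-definedness of this straightening procedure rigorous — rather than hand-waving ``iterate the move'' — is where the real work lies; everything else is bookkeeping with the identities already collected in Lemmas~\ref{L:subs}--\ref{L:reading}.
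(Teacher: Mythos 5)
Your plan is correct and follows essentially the same route as the paper: your key lemma $L_{\RC(w)}(i) = F_1(w,i-3)+2$ is exactly what the paper extracts from Lemma~\ref{lem:word_to_tree} (the $\geq$ direction) together with Lemma~\ref{lem:fully_is_left_right} and Corollary~\ref{coro:fully_is_left_right} (the $\leq$ direction), and the ``slack-transfer'' straightening you describe for an arbitrary subsequence $S \preceq \RC(w)$ is precisely the argument in the paper's proof of Lemma~\ref{lem:fully_is_left_right}. The detour through Burcsi et al.'s prefix normal forms in your first paragraph is unnecessary, as you yourself observe, since everything reduces to that lemma.
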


The proof of Theorem~\ref{thm:equiv} is based on the following lemmas and corollary.

\begin{lemma}\label{lem:word_to_tree}
    Let $w$ be a binary word and $u$ a factor of $w$. Then $\RC(u)$ is a subsequence of $\RC(w)$ with size $|u|+3$ and $|u|_1+2$ leaves.
\end{lemma}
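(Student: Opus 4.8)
The plan is to deduce the statement from the monoid structure of $(\Sub,\diamond)$ recorded just after Definition~\ref{D:graft} and from the morphism property of $\RC$.

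First I would write $u$ as a factor of $w$, say $w = p\,u\,v$ for some (possibly empty) binary words $p$ and $v$. Applying Lemma~\ref{L:cwt}~\eqref{L:cwti} twice gives
$$\RC(w) = \RC(p) \diamond \RC(u) \diamond \RC(v),$$
where associativity is guaranteed by the monoid structure of $(\Sub,\diamond)$, with $\RC(\varepsilon) = (2)$ acting as identity in the degenerate cases $p = \varepsilon$ or $v = \varepsilon$. By the observation following Definition~\ref{D:graft}, every pair of caterpillar sequences satisfies $T \preceq T \diamond T'$ and $T' \preceq T \diamond T'$; hence $\RC(u) \preceq \RC(u) \diamond \RC(v)$ and $\RC(u) \diamond \RC(v) \preceq \RC(p) \diamond \bigl(\RC(u) \diamond \RC(v)\bigr) = \RC(w)$. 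Since $\preceq$ is transitive by Proposition~\ref{P:poset}, we conclude $\RC(u) \preceq \RC(w)$, that is, $\RC(u)$ is a caterpillar subsequence of $\RC(w)$.

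It then remains only to record the size and number of leaves of $\RC(u)$, which depend on $u$ alone: Lemma~\ref{L:reading}~\eqref{L:size} gives $\size{\RC(u)} = |u| + 3$, and Lemma~\ref{L:reading}~\eqref{L:riv} gives $\leaf{\RC(u)} = |u|_1 + 2$. This completes the argument.

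I do not expect any real obstacle here, since the statement is essentially a bookkeeping consequence of results already established. The only point requiring mild care is the treatment of the boundary cases where $u$ is a prefix of $w$, a suffix of $w$, or $w$ itself; each of these is absorbed cleanly by the identity $(2) = \RC(\varepsilon)$ of $(\Sub,\diamond)$ together with the reflexivity of $\preceq$.
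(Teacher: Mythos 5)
Your proof is correct and follows essentially the same route as the paper's: factor $w=puv$, apply the morphism property of $\RC$ to get $\RC(w)=\RC(p)\diamond\RC(u)\diamond\RC(v)$, and then read off size and leaf count from Lemma~\ref{L:reading}. The only difference is that you spell out explicitly (via the observation $S,S'\preceq S\diamond S'$ and transitivity of $\preceq$) why the graft decomposition yields $\RC(u)\preceq\RC(w)$, a step the paper leaves implicit.
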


\begin{proof}
    Let $w\in\{0,1\}^*$ and $u\in\Fact(w)$.
    Then there exist $x,y\in\{0,1\}^*$ such that $w=xuy$.
    By Lemma~\ref{L:cwt}, we have $\RC(w)=\RC(x)\diamond\RC(u)\diamond\RC(y)$. So $\RC(u)$ is a subsequence of $\RC(w)$.
    To conclude the proof note that $\size{\RC(u)}=|u|+3$ by Lemma~\ref{L:reading}~\eqref{L:size} and
    $\leaf{\RC(u)}=|u|_1+2$ by Lemma~\ref{L:reading}~\eqref{L:riv}.
\end{proof}

\begin{lemma}\label{lem:fully_is_left_right}
    Let $w$ be a binary word.
    For each integer $i\in\{3,\ldots,|w|+3\}$, there exist
    integers $j$ and $j'$ such that
    $$\Left_i(\Right_j(\RC(w))) \quad \text{and} \quad \Right_i(\Left_{j'}(\RC(w))),$$
    are fully leafed caterpillar subsequences.
\end{lemma}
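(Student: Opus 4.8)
The plan is to reduce everything to the prefix-normal-form story developed in the previous section and to the basic tools on $\RC$, $\Left$ and $\Right$. Fix a binary word $w$ and an integer $i \in \{3,\ldots,|w|+3\}$; put $m = i - 3$, so I am looking for a fully leafed induced subcaterpillar of $\RC(w)$ of size $i$, i.e.\ one with $L_{\RC(w)}(i)$ leaves. By Equation~\eqref{EQ:leaf-function-seq} and Lemma~\ref{lem:word_to_tree}, the caterpillar subsequences of $\RC(w)$ of size $i$ are, up to the possible ``boundary'' adjustments, exactly the sequences $\RC(u)$ for $u \in \Fact_m(w)$ together with their left/right truncations; and $\leaf{\RC(u)} = |u|_1 + 2$. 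So the number of leaves of a fully leafed subsequence of size $i$ is $F_1(w,m) + 2$, and a factor realizing the maximum is a witness I want to ``place'' as a $\Left$ of a $\Right$ (or a $\Right$ of a $\Left$).

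Here is the concrete construction I would carry out. First I would choose a factor $u$ of $w$ of length $m$ with $|u|_1 = F_1(w,m)$, and write $w = x\,u\,y$. Set $j = |xu| + 3$, so that $\Right_j(\RC(w)) = \RC(\suff_{j-3}(w)) = \RC(uy')$ for the appropriate suffix, by Lemma~\ref{L:reading}~\eqref{L:rii}; more precisely I want $j$ chosen so that $u$ is a \emph{prefix} of $\suff_{j-3}(w)$, namely $j - 3 = |u| + |y| = m + |y|$, i.e.\ $j = |xu|+3$ is wrong and instead $j = m + |y| + 3 = |uy| + 3$. Then $\suff_{j-3}(w) = uy$, and $\Left_i(\Right_j(\RC(w))) = \Left_i(\RC(uy)) = \RC(\pref_{i-3}(uy)) = \RC(u)$ by Lemma~\ref{L:reading}~\eqref{L:ri} (using $i - 3 = m = |u|$). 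This sequence has size $i$ and $|u|_1 + 2 = F_1(w,m) + 2$ leaves, so by the paragraph above it is fully leafed; this handles the $\Left_i(\Right_j(\cdot))$ half. For the $\Right_i(\Left_{j'}(\cdot))$ half I would symmetrically take $j' = |xu| + 3$ so that $\pref_{j'-3}(w) = xu$, giving $\Left_{j'}(\RC(w)) = \RC(xu)$ and then $\Right_i(\RC(xu)) = \RC(\suff_{i-3}(xu)) = \RC(u)$, again fully leafed. (One can also invoke Lemma~\ref{L:subsiii} and Equation~\eqref{eq:graft-tilde} to get one half from the other by reversal, but the direct argument is just as short.)

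The one genuine point requiring care is the claim that $L_{\RC(w)}(i) = F_1(w,m) + 2$, i.e.\ that \emph{every} caterpillar subsequence of $\RC(w)$ of size $i$ has at most $F_1(w,m)+2$ leaves, so that my witness $\RC(u)$ is actually optimal. This is where the main obstacle lies: a caterpillar subsequence $S \preceq \RC(w)$ of size $i$ need not itself be of the form $\RC(v)$ — it may be a ``partial'' subcaterpillar (as in Example~\ref{ex:subsequences}), obtained by shrinking the two end-blocks of some $\RC(v)$. The remedy is the standard one already used in the proof of Proposition~\ref{prop:pn_implies_cat}: given such an $S = (s_1,\dots,s_{k'}) \preceq (r_1,\dots,r_k) = \RC(w)$ with $s_j \le r_{j+k-k'}$, one shifts ``excess'' leaves from interior blocks toward one end without changing $\leaf{S}$ or $\size{S}$, arriving at a subsequence of the form $\Left$ of some $\RC(v)$, hence at $\RC(v')$ for a genuine factor $v'$ of $w$ of length $m$ with $\leaf{\RC(v')} = \leaf{S}$; then $\leaf{S} = |v'|_1 + 2 \le F_1(w,m) + 2$. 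Once this inequality is in hand the lemma follows immediately from the explicit constructions of the previous paragraph. I expect this monotone-rearrangement step to be the only part needing a careful (though routine) argument; everything else is bookkeeping with Lemmas~\ref{L:cwt} and~\ref{L:reading}.
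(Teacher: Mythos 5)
Your overall route is the paper's route in a different packaging: the paper proves the lemma by normalizing an arbitrary fully leafed subsequence into the form $\Left_i(\Right_j(\RC(w)))$ and then deduces the identity $L_{\RC(w)}(i)=F_1(w,i-3)+2$ as Corollary~\ref{coro:fully_is_left_right}, whereas you establish that identity first and then exhibit the explicit witnesses $j=|uy|+3$ and $j'=|xu|+3$ for a factor $u$ maximizing $|u|_1$. Those explicit choices are correct (by Lemma~\ref{L:reading} both compositions reduce to $\RC(u)$), and the lower bound $L_{\RC(w)}(i)\ge F_1(w,i-3)+2$ is indeed immediate from Lemma~\ref{lem:word_to_tree}. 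So the only real content is the upper bound, exactly the step you defer as ``routine.''

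That step is not routine, and your sketch of it is incorrect as stated. You claim that an arbitrary $S\preceq\RC(w)$ of size $i$ can be rearranged, \emph{without changing} $\size{S}$ or $\leaf{S}$, into $\RC(v')$ for a factor $v'$ of length $i-3$. Write $S=(s_1,\ldots,s_k)$ occurring with shift $\ell$ in $\RC(w)=(r_1,\ldots,r_n)$ and let $d=\sum_{m=2}^{k-1}(r_{\ell+m}-s_m)$ be the total interior deficit. A same-window normalization $(s_1',r_{\ell+2},\ldots,r_{\ell+k-1},s_k')$ of the same size forces $s_1'+s_k'=s_1+s_k-d$, which is impossible when $d>s_1+s_k-2$; so for a general $S$ the leaf-preserving rearrangement you describe simply does not exist. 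What rescues the argument --- and what constitutes essentially the whole of the paper's proof --- is that a \emph{fully leafed} $S$ necessarily satisfies $s_1+s_k-2\ge d$: otherwise one can delete the two end blocks (collapsing the first spine vertex into a leaf of the second, and the last into a pendant leaf) and redistribute the freed $s_1+s_k-1\le d$ vertices into interior deficits, producing a subsequence of the same size with one more leaf, contradicting maximality. You need to (a) restrict your rearrangement to a fully leafed witness, which suffices for the upper bound, and (b) actually prove this inequality by the local-improvement argument; the single-leaf shift you import from Proposition~\ref{prop:pn_implies_cat} handles only the case of one deficient interior block and does not by itself give the full normalization.
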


\begin{proof}
    Let $n$ be the length of $\RC(w)$ and write $\RC(w)=(r_1,\cdots,r_n)$.
    Consider a fully leafed caterpillar subsequence $S=(s_1,\ldots,s_k)$
    of $\RC(w)$ of size $i$ such that
    $$S\neq \Left_i(\Right_j(\RC(w)))\text{ for all}\; j\ge i.$$
    As $S$ is a subsequence of $\RC(w)$, it occurs with a shift $\ell$, i.e.
    $$\spine(S)  [m]\le \spine(\RC(w))[\ell+m].$$
    
    Let $d=\sum_{m=2}^{k-1} r_{\ell+m} - s_m$.
    We show by contradiction that $s_1+s_k-2\ge d$.
    Indeed, assume that $s_1+s_k-2 < d$.
    Let $T$ be the subtree corresponding to $S$.
    The idea is to first consider the sequence $(s_2+1,\ldots,s_{k-1},1)$ that corresponds to $T$ where we removed $s_1+s_k-1\le d$ vertices.
    Then we can add $s_1+s_k-1\le d$ leaves to the previous tree.
    More formally, we choose an integer $z\in\{2,\ldots,k-1\}$ maximal such that $s_1+s_k-1\ge  \sum_{m=2}^z r_{\ell+m}-s_m$. Set $\xi=s_1+s_k-1 - \sum_{m=2}^z r_{\ell+m}-s_m$,
    so that the sequence
    $$S'=\left(r_{\ell+2}+1,r_{\ell+3},\ldots,r_{\ell+z}, s_{z+1}+\xi, s_{z+2},\ldots,s_{k-1},1\right)$$
    is a caterpillar subsequence of $\RC(w)$ and satisfies $\size{S'}=\size{S}$ and $\leaf{S'}=\leaf{S}+1$. This contradicts the fact that $S$ is a fully leafed subsequence.
    
    Hence, we have $s_1+s_k-2\ge d$.
    Therefore, $S'=(s'_1 ,r_{\ell+2},\ldots,r_{\ell+k-1},s'_k)$ with $s'_1=s_1-\min(s_1-1,d)$,  $s'_k=s_1+s_k-d-s'_1$ is a fully leafed subsequence (as the number of leaves did not change).
    Moreover, this sequence is of the form $\Left_i(\Right_j(\RC(w)))$ with
    $$j=n- \sum_{m =1}^{\ell} (r_m +1) - r_{\ell+1}+s'_1.$$
    Figure~\ref{fig:fully_is_left_right} illustrates the transformation $S\mapsto S'$. The red caterpillar subsequence $(3,0,2)$ is mapped to the blue one $(3,1,1)=\Left_8(\Right_{12}(RC(00110101100)))$.
    
    The case for $\Right_i(\Left_{j'}(\RC(w)))$ is symmetric.
\end{proof}

Observe that a fully leafed caterpillar subsequence of $\RC(w)$ is not always of the form $\Left_i(\Right_j(\RC(w)))$. For example, let $w=00110101100$ and consider the graph of $\RC(w)$ depicted in Figure~\ref{fig:fully_is_left_right}. We have $L_{\RC(w)}(8)=5$ and the subtree highlighted in red is fully leafed but cannot be obtained as $\Left_8(\Right_j(\RC(w)))$ for any $j\ge 8$.

\begin{figure}[ht]
    \begin{center}
        \includegraphics{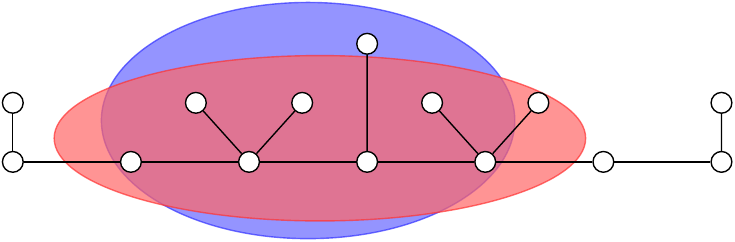}
    \end{center}
    \caption{Two fully leafed subtrees of $\RC(00110101100)$.}\label{fig:fully_is_left_right}
\end{figure}

\begin{corollary}\label{coro:fully_is_left_right}
    Let $w$ be a binary word. For each given size $i \geq 3$, there exists a factor $u$ of $w$ of length $i - 3$ such that $\RC(u)$ is fully leafed subsequence.
\end{corollary}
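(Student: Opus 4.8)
The plan is to combine Lemma~\ref{lem:fully_is_left_right} with Lemma~\ref{L:reading} to translate a fully leafed caterpillar subsequence back into a factor of $w$. By Lemma~\ref{lem:fully_is_left_right}, for any $i \in \{3,\ldots,|w|+3\}$ there is an integer $j$ such that $T := \Left_i(\Right_j(\RC(w)))$ is a fully leafed caterpillar subsequence of $\RC(w)$. The idea is to exhibit $T$ itself as the reading caterpillar sequence of an explicit factor of $w$.

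First I would write $w = xy z$ where $\suff_{j-3}(w)$ is the suffix read off by $\Right_j$; more precisely, by Lemma~\ref{L:reading}~\eqref{L:rii} we have $\Right_j(\RC(w)) = \RC(\suff_{j-3}(w))$. Set $v = \suff_{j-3}(w)$, so $v$ is a factor (in fact a suffix) of $w$ with $|v| = j-3$. Then applying Lemma~\ref{L:reading}~\eqref{L:ri} to the word $v$ gives $\Left_i(\RC(v)) = \RC(\pref_{i-3}(v))$, provided $3 \leq i \leq |v|+3 = j$, which holds since $j \geq i$. Hence $T = \Left_i(\Right_j(\RC(w))) = \Left_i(\RC(v)) = \RC(\pref_{i-3}(v))$. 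Setting $u = \pref_{i-3}(v)$, we see that $u$ is a prefix of a suffix of $w$, hence a factor of $w$, of length $i-3$, and $\RC(u) = T$ is fully leafed, which is exactly the claim.

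The only point requiring a little care is the bookkeeping of sizes: I must check that the index $j$ produced by Lemma~\ref{lem:fully_is_left_right} indeed satisfies $j \geq i$ and $j \leq |w|+3$, so that both applications of Lemma~\ref{L:reading} (parts \eqref{L:rii} and \eqref{L:ri}) are legitimate and $\pref_{i-3}(\suff_{j-3}(w))$ is well-defined. This is immediate from the statement of Lemma~\ref{lem:fully_is_left_right}, which asserts $\Left_i(\Right_j(\RC(w)))$ is a valid caterpillar subsequence, forcing $i \leq j \leq |\RC(w)| = |w|+3$. So there is essentially no obstacle here — the corollary is a direct repackaging of the previous two lemmas — and the "hard part," such as it is, was already handled inside the proof of Lemma~\ref{lem:fully_is_left_right}. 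I would therefore keep the proof to a few lines: invoke Lemma~\ref{lem:fully_is_left_right} to get $j$, use Lemma~\ref{L:reading}~\eqref{L:rii} and~\eqref{L:ri} to rewrite $\Left_i(\Right_j(\RC(w)))$ as $\RC(\pref_{i-3}(\suff_{j-3}(w)))$, and observe that $\pref_{i-3}(\suff_{j-3}(w))$ is a factor of $w$ of length $i-3$.
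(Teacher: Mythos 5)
Your proof is correct and follows essentially the same route as the paper's: invoke Lemma~\ref{lem:fully_is_left_right} to obtain $j \ge i$, then apply Lemma~\ref{L:reading}~\eqref{L:rii} and~\eqref{L:ri} to identify $\Left_i(\Right_j(\RC(w)))$ with $\RC(\pref_{i-3}(\suff_{j-3}(w)))$, which is the reading caterpillar of a factor of $w$ of length $i-3$ (the paper writes this factor explicitly as $w_{n-j+4}\cdots w_{n-j+i}$). Your bookkeeping remark about $i \le j \le |w|+3$ is exactly the point the paper handles by noting $j \ge i$ when citing the lemma.
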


\begin{proof}
    Let $n=|w|$ and write $w=w_1\cdots w_n$ with $w_k\in\{0,1\}$. Recall that $\RC(w)$ has size $n+3$.
    Let $i\in\{4,\ldots,n+3\}$.
    By Lemma~\ref{lem:fully_is_left_right}, there exists an index $j\ge i$ such that   $\Left_i(\Right_j(\RC(w)))$ is a fully leafed subsequence of $\RC(w)$ with size $i$.
    Finally, let $u = w_{n-j+4}\cdots w_{n-j+i}$. Then by Lemma~\ref{L:reading}~\eqref{L:ri} and~\eqref{L:rii}, we have
    $$ \Left_i(\Right_j(\RC(w))) = \Left_i(\RC(\suff_{j-3}(w))) =  \RC(u)$$
    and $|u|= i-3$ as required.
\end{proof}

We now have the necessary tools to prove Theorem~\ref{thm:equiv}.

\begin{proof}[Proof of Theorem~\ref{thm:equiv}]
    Assume first that $F_1(w,i)> F_1(w',i) \text{ for some } i\in\N$.
    So there exists  $u$ a factor of $w$ of length $i$ such that $|u|_1>|u'|_1$ for any factor $u'$ of $w'$ of length $i$.
    Then by Lemma~\ref{lem:word_to_tree}, $\RC(u)$ is a subsequence of $\RC(w)$ with strictly more leaves than any subsequence of $\RC(w')$ that is equal to a $\RC(u')$ for some factor $u'$ of $w'$ of length $i$.
    We conclude that $L_{\RC(w)}(i)>L_{\RC(w')}(i)$ by Corollary~\ref{coro:fully_is_left_right}.
    
    Assume now that $L_{\RC(w)}\neq L_{\RC(w')}$. Let $n=|w|$ and $w=w_1w_2\cdots w_n$ with $w_j\in\{0,1\}$.
    Then, without loss of generality, there exists an integer $i$ such that $L_{\RC(w)}(i) > L_{\RC(w')}(i)$.
    Let $S_w$ be a fully leafed subsequence of $\RC(w)$ such that $\size{S_w}=i$ and $\leaf{S_w}=L_{RC(w)}(i)$. By Lemma~\ref{lem:fully_is_left_right}, we can suppose that 
    $S_w$ is such that 
    there exists an integer $k$ such that
    $$\RC(w) = \Left_k(\RC(w)) \diamond S_w \diamond \Right_{n+9-k-i} (\RC(w)).$$
    Then, by Equation~\eqref{EQ:n1} and Lemma~\ref{L:reading},
    \begin{alignat*}{2}
    & L_{\RC(w)}(i) =\leaf{S_w}\\
    & = \leaf{\RC(w)} - \leaf{\Left_k(\RC(w))} - \leaf{\Right_{n+9-k-i} (\RC(w))}+4\\
    & = |w|_1 +2 - |\pref_{k-3}(w)|_1 -2 - |\suff_{n+6-k-i}(w)|_1 - 2+4\\
    & = |w_{k-2}w_{k-1}\cdots w_{k+i-6}|_1 +2.
    \end{alignat*}
    So there is a factor $u=w_{k-2}w_{k-1}\cdots w_{k+i-6}$ of length $i-3$ of $w$ that contains $L_{\RC(w)}(i) -2$ times the letter $1$.
    Therefore
    \begin{alignat*}{2}
    |u|_1 & = L_{\RC(w)}(i) - 2 \\
    & > L_{\RC(w')}(i) - 2  &&\quad \mbox{(by hypothesis)}\\
    & \ge |u'|_1  && \quad \mbox{(by Lemma~\ref{lem:word_to_tree})}
    \end{alignat*}
    for any $u'$ factor of $w'$ such that $|u'|=i-3$. Hence $F_1(w,i-3)>F_1(w',i-3)$ and $w\not\equiv w'$.
\end{proof}


\section{Perspectives}\label{sec:concl}

Given a family $\mathcal{A}$ of graphs, let $\L(\mathcal{A})$ be the language of all possible leaf words $\Delta L_G$ for $G \in \mathcal{A}$. Let $\Graphs$ be the family of all graphs, $\Trees$ the family of all trees and $\Cater$ the family of all caterpillars.
Lemma~\ref{prop:alph-graph} implies $\L(\Trees) \cap \L(\Graphs \setminus \Trees) = \emptyset$.

As caterpillars are trees, one might wonder whether the language of  leaf words of caterpillars and the language of leaf words of trees are identical. This is not the case: Figure~\ref{fig:ternary_tree}  shows a tree  $T$ which is the smallest counter-example.   Indeed the leaf word  $w=1101011011$ of $T$  is not  prefix normal because
$$|\pref_5(w)|_1 = |11010|_1 = 3 < 4 = |11011|_1 = |\suff_5(w)|_1.$$
Hence,  $\L(\Cater) \subsetneq \L(\Trees)$.
The relation between the different classes of graphs considered in the paper is summarized in Figure~\ref{F:venn}.

\begin{figure}[ht]
    \centering
    \begin{tikzpicture}
    \draw (0,0) rectangle (6,3);
    \draw (2,0) -- (4,3);
    \draw (1.3,1.5) ellipse (1.2cm and 1cm);
    \node[above right] at (6,3)     {$\L(\Graphs)$};
    \node[below right] at (0,3)     {$\L(\Trees)$};
    \node[below right] at (0.8,2.3) {$\L(\Cater)$};
    \node[below left]  at (6,3)     {$\L(\Graphs \setminus \Trees)$};
    \end{tikzpicture}
    \caption{The relations between the languages $\L(\Graphs)$, $\L(\Trees)$, $\L(\Graphs \setminus \Trees)$ and $\L(\Cater)$.}\label{F:venn}
\end{figure}

In order to investigate the language $\L(\Trees)$ of trees,
we have first extended the notion of prefix normal word to a notion of $k$-prefix normal word different from 
the one described in \cite{fici} as follows.
We say that a word $w$ is \emph{$k$-prefix normal} if for all $p \in \Pref(w)$ and for all $f \in \Fact_{|p|}(w)$ such that $|p| = |f|$, we have $|f|_1 - |p|_1 \leq k$.
One might wonder if there exists a constant $k$ such that $\L(\Trees) \subseteq \kPNW$, where $\kPNW$ is the set of all $k$-prefix normal words.
Unfortunately, this is not the case, since there exists a family $F_k$ of trees whose leaf words are $k$-prefix normal for every positive integers $k$ but not $(k-1)$-prefix normal.
Such a family $F_k$ is constructed as follows: Start with a single vertex, three simple chains of length $k-1$ and three star graphs with $k+3$ vertices as shown in Figure \ref{fig:k-prefix}.
Then add three edges connecting the isolated vertex to each simple chain and add three more edges connecting the other end of the simple chains to each star graph.
The leaf word associated with this graph is $1^{k+1}0^{k}10^{k}1^{k+1}0^{k}1^{k+1}$ and it is $k$-prefix normal but not $(k-1)$-prefix normal.
The graphs $F_1, F_2$ and $F_3$ are illustrated in Figure~\ref{fig:k-prefix}. It is easy to see that the leaf word of $F_2$ is $1^30^210^21^30^21^3$ which is $2$-prefix normal but not $1$-prefix normal.

\begin{figure}[ht]
	\begin{center} \begin{tabular}{ccc}
	\begin{minipage}[c]{.3\linewidth}
		\includegraphics[scale=1]{ternary_tree}
	\end{minipage}
  &
	\begin{minipage}[c]{.3\linewidth}
		\includegraphics[scale=1]{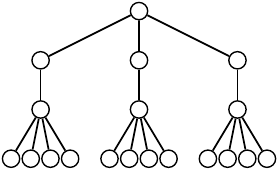}
	\end{minipage}
  &
	\begin{minipage}[c]{.3\linewidth}
		\includegraphics[scale=1]{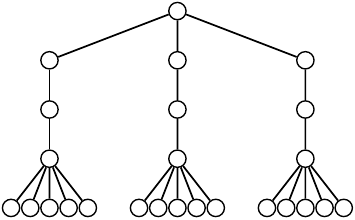}
	\end{minipage}
  \\ [1cm]
  $F_1$
  &
  $F_2$
  &
  $F_3$
	\end{tabular} \end{center}
	\caption{The trees $F_1$, $F_2$ and $F_3$.}
	\label{fig:k-prefix}
\end{figure}

\section*{References}\label{sec:biblio}
\bibliography{words-fully-leafed}

\newpage

\appendix
\section{Proofs}\label{sec:appendix}

We now give the details of the proofs omitted in Section~\ref{sec:caterpillar_sequences}.

\begin{proof}[Proof of Proposition~\ref{P:poset}]
    
    \emph{Reflexivity}. Immediate.
    
    \emph{Antisymmetry}. Let $S=(s_1,s_2,\dots, s_{k_S})$ and $T=(t_1,t_2,\dots, t_{k_T})$ be two caterpillar sequences such that $S\preceq T$ and $T\preceq S$.
    Then we have $k_S = k_T$ so that the shift is $i = 0$.
    Since $S\preceq T$, we have $\spine(S)[j] \leq \spine(T)[j]$ for all $j \in \{ 1,2,\ldots,k_S\}$.
    Similarly, since $T \preceq S$, $\spine(S)[j] \geq \spine(T)[j]$.
    This implies that $S = T$.
    
    \emph{Transitivity}. Consider three caterpillar sequences, $S=(s_1,s_2,\dots, s_{k_S})$, $T=(t_1,t_2,\dots, t_{k_T})$ and $U=(u_1,u_2,\dots, u_{k_U})$, such that $S\preceq T$ and $T\preceq U$.
    By definition, there exist two shifts $i \in \{0,1,\dots, k_T-k_S\}$ and $\ell \in \{0,1,\dots, k_U-k_T\}$ such that
    \begin{alignat*}{2}
    \spine(S)[j] & \leq \spine(T)[j+i],    \quad && \mbox{for $j=1,2,...,k_S$;} \\
    \spine(T)[j] & \leq \spine(U)[j+\ell], \quad && \mbox{for $j=1,2,...,k_T$.}
    \end{alignat*}
    Let $j \in \{1,2,\dots,k_S\}$. Then $1 \leq j + i \leq k_S + (k_T - k_S) = k_T$, which implies
    $$\spine(S)[j] \leq \spine(T)[j+i] \leq \spine(U)[j+i+\ell].$$
    Since $0\leq i + \ell \leq k_U - k_S$, we conclude that $S \preceq U$.
\end{proof}

\begin{proof}[Proof of Lemma~\ref{L:subs}]
    (i) We proceed by induction on $n = |S|$.
    
    \textsc{Basis}. If $n = 3$, then $S = (2)$ and $i = 3$. Thus
    $\Left_i(S) = (2) = S \preceq S$.
    
    \textsc{Induction.} Assume that $\Left_i(S') \preceq S'$ for any caterpillar sequence $S'$ of size $n' < n$ and let $S = (s_1,\ldots,s_k)$ be of size $n$. If $i = n$, then
    $\Left_i(S) = S \preceq S.$ Suppose $i\in\{3,\ldots,n-1\}$.
    
    On one hand, if $s_k \geq 2$, then
    \begin{alignat*}{2}
    \Left_i(S)
    & = \Left_i(s_1,\ldots,s_{k-1},s_k - 1)
    && \qquad \mbox{(by Equation~\eqref{EQ:left})}\\
    & \preceq (s_1,\ldots,s_{k-1},s_k - 1)
    && \qquad \mbox{(by induction hypothesis)}\\
    & \preceq (s_1,\ldots,s_{k-1},s_k)
    && \qquad \mbox{(by Definition~\ref{D:subsequence})}\\
    & = S.
    \end{alignat*}
    
    On the other hand, if $s_k = 1$, then
    \begin{alignat*}{2}
    \Left_i(S)
    & = \Left_i(s_1,\ldots,s_{k-1} + 1)
    && \qquad \mbox{(by Equation~\eqref{EQ:left})}\\
    & \preceq (s_1,\ldots,s_{k-1} + 1)
    && \qquad \mbox{(by induction hypothesis)}\\
    & \preceq (s_1,\ldots,s_{k-1},s_k)
    && \qquad \mbox{(by Definition~\ref{D:subsequence})}\\
    & = S,
    \end{alignat*}
    since $\spine(s_1,\ldots,s_{k-1} + 1)[k-1]\leq(s_{k-1} + 1)+1 = \spine(S)[k-1]$.
    
    (ii) Symmetric to (i).
    
    (iii) Follows from the symmetry of Equations~\eqref{EQ:left} and \eqref{EQ:right}.
\end{proof}

\begin{proof}[Proof of Lemma \ref{L:alpha-beta}]
We only prove the statement about $\Left_i(S)$, since the proof for  $\Right_i(S)$ is symmetric.
The proof is done by induction on $n = |S|$, with $i$ fixed.

\textsc{Basis}. If $n = i$, then $\Left_i(S) = \Left_{\size{S}}(S) = S$, which implies $a = k - 1$ and $\alpha = s_k$ so that  Relations~\eqref{EQ:alpha} are true.

\textsc{Induction.} Assume that the result holds for $n - 1$. Since $n > i$, by Equation~\ref{EQ:left}, there exists a caterpillar sequence $S' = (s'_1,\ldots,s'_{k'})$ of size $n - 1$, such that $\Left_i(S) = \Left_i(S')$, where
$$S' = \begin{cases}
  (s_1,\ldots,s_{k-1},s_k - 1), & \mbox{if $s_k \geq 2$;} \\
  (s_1,\ldots,s_{k-1} + 1),     & \mbox{if $s_k = 1$.}
\end{cases}$$
Therefore, by the induction hypothesis, there exist unique integers $a$ and $\alpha$ such that $\Left_i(S') = (s'_1,\ldots,s'_{a},\alpha)$ with
$$0 \leq a \leq k' - 1, \quad 1 \leq \alpha \leq s'_{a+1} + 1 \quad \text{and} \quad i = \sum_{m=1}^{a} (s'_m + 1) + (\alpha + 1).$$

We claim that 
\begin{equation*}\label{EQ:left-left}
\Left_i(S) = \Left_i(S') = (s'_1,\ldots,s'_{a},\alpha) = (s_1,\ldots,s_{a},\alpha)
\end{equation*}
To prove this claim, note that 
if $s_k \geq 2$, then $(s'_1,\ldots,s'_{k'}) = (s_1,\ldots,s_{k-1},s_k-1)$ and $k' = k$. Since $a \leq k' - 1 = k - 1$, we have $s'_m = s_m$ for $m = 1,2,\ldots,a$. 
Similarly, if $s_k = 1$, then $(s'_1,\ldots,s'_{k'}) = (s_1,\ldots,s_{k-1}+1)$ and $k' = k - 1$. Since $a \leq k' - 1 = k - 2$, we also have $s'_m = s_m$ for $m = 1,2,\ldots,a$, 

It remains to show that the integers $a$ and $\alpha$ satisfy Relations~\eqref{EQ:alpha}.
On one hand, if $s_k\ge2$, we have
$$0 \leq a \leq k' - 1 = k - 1,$$
$$1 \leq \alpha \leq s'_{a+1} + 1 \leq s_{a+1} + 1$$
and, by the induction hypothesis,
$$i = \sum_{m=1}^{a} (s'_m + 1) + (\alpha + 1) = \sum_{m=1}^a (s_m + 1) + (\alpha + 1).$$
On the other hand, if $s_k=1$, we obtain
$$0 \leq a \leq k' - 1 = k - 2 \leq k - 1,$$
$$1 \leq \alpha \leq s'_{a+1} + 1 \leq s_{a+1} + 1$$
and, by the induction hypothesis,
$$i = \sum_{m=1}^{a} (s'_m + 1) + (\alpha + 1) = \sum_{m=1}^a (s_m + 1) + (\alpha + 1).$$
\end{proof}

\begin{proof}[Proof of Lemma~\ref{L:cwt}]
    (i) Let $u$ and $v$ be binary words. We proceed by induction on the length of $v$.
    
    \textsc{Basis.} For $v=\varepsilon$, we obtain $\RC(uv)=\RC(u)=\RC(u)\diamond (2) =\RC(u)\diamond \RC(\varepsilon)$ since $(2)$ is the identity of the graft.
    
    \textsc{Induction.} Let $n\ge 1$. Assume that $\RC(u)\diamond \RC(v)=\RC(uv)$
    for any binary word $v$ such that $|v|<n$. Consider $|v|=n$ and  write $v=v'a$ where $v'\in\{0,1\}^*$ and $a\in\{0,1\}$.
    We set 
    $$S=\begin{cases}
         (1,1), & \text{if }a=0;\\
         (3),   &\text{if }a=1.\\
        \end{cases}$$    
    We have
    \begin{alignat*}{2}
    \RC(uv)&=\RC(uv'a) = \RC(uv')\diamond S &&\quad \mbox{(by Definition~\ref{D:reading})}\\
    & = (\RC(u)\diamond\RC(v'))\diamond S &&\quad \mbox{(by induction hypothesis)}\\
    & = \RC(u)\diamond(\RC(v')\diamond S) &&\quad \mbox{(by associativity of $\diamond$)}\\
    & = \RC(u)\diamond \RC(v'a) &&\quad  \mbox{(by definition of $\diamond$)}\\
    & = \RC(u)\diamond \RC(v).&&
    \end{alignat*}
    
    (ii) We proceed by induction on the length of the binary word $w$.
    
    \textsc{Basis.} For $w=\varepsilon$, we clearly have $\RC(\tilde{\varepsilon})=(2)=\widetilde{\RC(\varepsilon)}$.
    
    \textsc{Induction.} Assume that $\RC(\tilde{v})=\widetilde{\RC(v)}$ for any binary word $v$ of length $|v|<|w|$.
    Let $w=ua$ where $u\in\{0,1\}^*$ and $a\in\{0,1\}$.
    Then
    \begin{alignat*}{2}
    \RC(\tilde{w}) & =\RC(\widetilde{ua}) = \RC(a\tilde{u})&&\\
    &  = \RC(a)\diamond \RC(\tilde{u}) && \quad \mbox{(by the previous point)}\\
    & =  \widetilde{\RC(a)}\diamond \widetilde{\RC(u)} &&\quad  \mbox{(by induction hypothesis)}\\
    & = \widetilde{\RC(u)\diamond\RC(a)} && \quad \mbox{(by Lemma \ref{L:cwt})}\\
    & = \widetilde{\RC(ua)}=\widetilde{\RC(w)}&& \quad \mbox{(by definition of $w$)}
    \end{alignat*}
    as required.
\end{proof}
\begin{proof}[Proof of Lemma~\ref{L:reading}]
    Let $\RC(w) = (r_1,\dots,r_k)$.
    
    (i) 
    We proceed by induction on $|w|$.
    
    \textsc{Basis.} If $|w| = 0$, then $i = 3$ and we have
    $$\Left_3(\RC(w)) = (2) = \RC(\varepsilon) = \RC(\pref_{3-3}(w)) = \RC(\pref_{i-3}(w)).$$
    
    \textsc{Induction.} Assume that $w = ub$ for some word $u$ and letter $b$. If $3 \le i < |w| + 3$, then $0 \le i - 3 < |w| = |u| + 1$ and $i - 3 \leq |u|$. Therefore,
    \begin{alignat*}{2}
    \RC&(\pref_{i-3}(w))\\
    & = \RC(\pref_{i-3}(ub)) \\
    & = \RC(\pref_{i-3}(u))
    && \qquad \mbox{(since $i - 3 \leq |u|$)} \\
    & = \Left_i(\RC(u))
    && \qquad \mbox{(by induction hypothesis)} \\
    & = \begin{cases}
    \Left_i(r_1,\ldots,r_{k-1} + 1), & \mbox{if $r_k = 1$;} \\
    \Left_i(r_1,\ldots,r_{k-1}, r_k - 1), & \mbox{if $r_k \geq 2$;} \\
    \end{cases}
    && \qquad \mbox{(by Equation~\ref{EQ:reading})} \\
    & = \Left_i(\RC(w))
    && \qquad \mbox{(by Equation~\ref{EQ:left})}
    \end{alignat*}
    as required. 
    
    Finally, if $i = |w| + 3$, we have
    \begin{align*}
    \Left_{|w|+3}(\RC(w)) = \RC(w)
    = \RC(\pref_{|w|}(w)) =  \RC(\pref_{i-3}(w))
    \end{align*}
    concluding the proof.
    
    (ii) It follows from (i), Lemmas~\ref{L:cwt}~\eqref{L:cwtii} and~\ref{L:subs}~\eqref{L:subsiii} that
    \begin{alignat*}{2}
    \Right_i(\RC(w))
    & = \left(\Left_i(\widetilde{\RC(w)})\right){\widetilde{\phantom{ab}}} \\
    & = \left(\Left_i(\RC(\widetilde{w}))\right){\widetilde{\phantom{ab}}} \\
    & = \left(\RC(\pref_{i-3}(\widetilde{w}))\right){\widetilde{\phantom{ab}}} \\
    & = \left(\RC(\widetilde{\suff_{i-3}(w)})\right){\widetilde{\phantom{ab}}} \\
    & = \RC(\suff_{i-3}(w)).
    \end{alignat*}
    
    (iii) By induction on $|w|$.
    
    \textsc{Basis.} If $|w|=0$ then $w=\varepsilon$ and
    $$\size{RC(w)}=\size{(2)} = 3 = |w|+3$$
    
    \textsc{Induction.} If $w=ua$ for some word $u$ and letter $a$. Let $RC(u)=(r_1,...,r_k)$, then
    \begin{align*}
        \size{RC(ua)}&=
        \begin{cases}
        \size{(r_1,...,r_k-1,1)}, &\mbox{if $a=0$}\\
        \size{(r_1,...,r_k+1)}, &\mbox{if $a=1$} 
        \end{cases} \\
        & =\size{(r_1,...,r_k)}+1\\
        &=|u|+3+1 \\
        &=|w|+3
    \end{align*}
    
    (iv) By Definition~\ref{D:reading}, on one hand, if $a = 0$, then
    $$\leaf{\RC(wa)} = \left(\sum_{j=1}^{k-1} r_i\right) + (r_k - 1) + 1  = \leaf{\RC(w)} = \leaf{\RC(w)} + a.$$
    On the other hand, if $a = 1$, then
    $$\leaf{\RC(wa)} = \left(\sum_{j=1}^{k-1} r_i\right) + (r_k+1)  = \leaf{\RC(w)} + 1 = \leaf{\RC(w)} + a.$$
    (v) We proceed by induction on $|w|$.
    
    \textsc{Basis.} If $|w| = 0$, then $w = \varepsilon$ and
    $$\leaf{\RC(w)} = \leaf{\RC(\varepsilon)} = \leaf{(2)} = 2 = |\varepsilon|_1 + 2 = |w|_1 + 2.$$
    
    \textsc{Induction.} Assume that $w = ub$ for some word $u$ and some letter $b$. Then
    $$\leaf{\RC(w)} = \leaf{\RC(ub)} = \leaf{\RC(u)} + b = (|u|_1 + 2) + b = |ub|_1 + 2 = |w|_1 + 2,$$
    where the second equality follows from~\eqref{L:riii} and the third equality holds by the induction hypothesis.
    
    (vi) We have
    $$\leaf{\Left_i(\RC(w))} = \leaf{\RC(\pref_{i-3}(w))} = |\pref_{i-3}(w)|_1 + 2,$$
    where the first equality follows from~\eqref{L:ri} and the second from~\eqref{L:riv}.
    
    (vii) Symmetric to~\eqref{L:rV}.
\end{proof}

\end{document}